\newcommand{\be}{\begin{equation}}
\newcommand{\ee}{\end{equation}}
\newcommand{\beq}{\begin{eqnarray}}
\newcommand{\eeq}{\end{eqnarray}}
\newtheorem{thm}{Theorem}[section]
\newtheorem{lma}{Lemma}[section]
\newtheorem{prop}{Proposition}[section]
\newtheorem{cor}{Corollary}[section]
\newtheorem{defn}{Definition}[section]
\theoremstyle{remark}
\newtheorem{rem}{Remark}[section]
\numberwithin{equation}{section}
\def\be{\begin{equation}}
\def\ee{\end{equation}}
\def\bee{\begin{equation*}}
\def\eee{\end{equation*}}
\def\bs{\begin{split}}
\def\es{\end{split}}
\def\ol{\overline}
\def\lf{\left}
\def\ri{\right}
\def\K{K\"ahler }
\def\KR{K\"ahler-Ricci }
\def\Ric{\text{\rm Ric}}
\def\Rm{\text{\rm Rm}}
\def\p{\partial}
\def\ol{\overline}
\def\heat{\lf(\frac{\p}{\p t}-\Delta\ri)}
\def\e{\varepsilon}
\def\a{{\alpha}}
\def\ijb{{i\bar{j}}}
\begin{document}

\title[]
{Hermitian manifolds with quasi-negative curvature}

 \author{Man-Chun Lee}
\address[Man-Chun Lee]{Department of Mathematics,Northwestern University,2033 Sheridan Road, Evanston, IL 60208}
\email{mclee@math.northwestern.edu}

\renewcommand{\subjclassname}{
  \textup{2010} Mathematics Subject Classification}
\subjclass[2010]{Primary 32Q45; Secondary 53C44
}

\date{\today}

\begin{abstract}
In this work, we show that along a particular choice of Hermitian curvature flow, the non-positivity of the first Ricci curvature will be preserved if the initial metric has Griffiths non-positive Chern curvature. If in addition, the first Ricci curvature is negative at a point, then the canonical line bundle is ample.
\end{abstract}

\keywords{Hermitian manifold, holomorphic bisectional curvature, \K metric}

\maketitle

\markboth{Man-Chun Lee}{Hermitian manifolds with quasi-negative  curvature}
\section{introduction}
Let $(M^n,g)$ be a compact Hermitian manifold with complex dimension $n$. In Hermitian differential geometry, a weaker version of Kobayashi conjecture predicts that the canonical line bundle is ample if the $g$ has negative holomorphic sectional curvature. When $g$ is in addition K\"ahler, it was recently proved by  Wu-Yau \cite{WuYau2016} in the projective case and Tosatti-Yang \cite{TosattiYang2017} in the \K case. Later on, Diverio and Trapani \cite{DiverioTrapani2016} further generalized the result which only assume holomorphic sectional curvature to be non-positive on $M$ and negative at some points. In this case, we say that the holomorphic sectional curvature is quasi-negative. In \cite{WuYau2016-2}, Wu and Yau also gave a direct proof of this result. For related works, we refer readers to \cite{HeierLuWong2010,HeierLuWong2016,
HeierLuWongZheng2017,
YangZheng2016,LeeStreets2019}.

Inspired by the Kobayashi conjecture and the above mentioned results, it was conjectured that the same conclusion holds if $g$ is only assumed to be Hermitian metric with quasi-negative holomorphic sectional curvature, for example see \cite{YangZheng2016}. To the best of author's knowledge, if the K\"ahlerity is a priori unknown, then the conjecture is still widely open even under a slightly stronger curvature condition, \textit{real bisectional curvature} introduced in \cite{YangZheng2016}. Motivated by this, we consider Hermitian manifolds with quasi-negative bisectional curvature.

Motivated by Hamilton's tensor maximum principle along Ricci flow, we attempt to deform the Hermitian metric so that its first Ricci curvature behaves in a better way. In \cite{Liu2014}, building on the work by B\"ohm and Wilking \cite{BohmWilking2007}, Liu showed that in \K case, the \KR flow will preserve the non-positivity of Ricci curvature for a short time if the initial metric has non-positive bisectional curvature. On the non-negative side, the preservation of non-negative bisectional curvature was proved by Bando \cite{Bando} in complex dimension $3$ and by Mok \cite{Mok} in higher dimension. Thanks to the strong maximum principle, this enables them to classify manifolds with curvature with weak sign. Along this line, in the non-\K case, it is rather natural to consider the Hermitian curvature flow introduced by Streets and Tian \cite{StreetTian2011}. This is a family of Hermitian metrics $g(t)$ which satisfies 
$$\partial_t g_{i\bar j}=-S_{i\bar j} +Q_{i\bar j}$$
where $Q$ is some contraction of torsion tensor $T$. In particular in \cite{Yury2016}, Ustinovskiy discovered that when $Q_{i\bar j}=\frac{1}{2}g^{k\bar l}g^{p\bar q}T_{\bar l \bar qi}T_{kp\bar j}$, the corresponding Hermitian metric flow $g(t)$ will preserve the non-negativity of bisectional curvature which generalized the work in \cite{Mok} to non-\K setting.

In this paper, we consider the case when $Q\equiv 0$. By combining the technique of Ustinovskiy \cite{Yury2016} and a version of Uhlenbeck's trick, we show that a Hermitian analogy of curvature conditions considered by B\"ohm-Wilking \cite{BohmWilking2007} will be preserved for a short time by this  choice of Hermitian curvature flow.
\begin{thm}\label{main}
Suppose $(M,g_0)$ is a compact Hermitian manifold with Griffiths non-positive Chern curvature (see Definition \ref{BK-def}). Let $g(t),t\in [0,T]$ be a solution to the equation
\be
\left\{
  \begin{array}{ll}
   &\partial_t g(t)=    -S(g(t)); \\
    &g(0)=  g_0.
  \end{array}
\right.
\ee
where $S$ is the second Ricci curvature with respect to the Chern connection. Then there is $\tau>0$ such that the first Ricci curvature $Ric(g(t))$ is non-positive on $[0,\tau]$. 
\end{thm}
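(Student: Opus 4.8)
The plan is to run Hamilton's tensor maximum principle for the Chern–Ricci curvature $\Ric(g(t))$, but on a suitable bundle where the reaction term becomes tractable. The first step is to compute the evolution equation of the first Chern–Ricci curvature $R_{i\bar j}$ (equivalently $-\partial\bar\partial \log\det g$) under $\partial_t g_{i\bar j}=-S_{i\bar j}$. Since $S_{i\bar j}=g^{k\bar l}R_{i\bar j k\bar l}$ and $R_{i\bar j}=g^{k\bar l}R_{k\bar l i\bar j}$, one gets an equation of the schematic form $\partial_t R_{i\bar j}=\Delta_C R_{i\bar j}+(\text{first-order torsion terms})+(\text{curvature}\ast\text{curvature})$, where $\Delta_C$ is the Chern Laplacian of $g(t)$; I would derive this carefully using the commutation identities for the Chern connection, keeping track of the torsion $T$. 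The key structural observation — this is the point where Ustinovskiy's technique enters — is that with $Q\equiv 0$ the second-Ricci flow is, up to gauge, as close as possible to Kähler–Ricci flow, so the worst quadratic terms in the reaction should be controllable by Griffiths non-positivity.

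The second step is the Uhlenbeck-type trick: introduce a time-dependent bundle isomorphism (solving an ODE driven by $S$, analogous to the one-parameter family of gauge transformations in Ricci flow) so that the pulled-back metric on $T^{1,0}M$ is fixed and the Chern curvature operator evolves by a \emph{reaction–diffusion} equation $\partial_t \Rm=\Delta_C \Rm+\mathcal{R}(\Rm,T)$ with no ``extra'' terms coming from differentiating the metric. Under this trick the cone of Griffiths non-positive curvature operators, and more importantly the relevant condition on the Chern–Ricci form, become \emph{fixed} convex subsets of the fibre, so Hamilton's maximum principle (in the version valid for subsolutions of parabolic systems on vector bundles over a compact manifold) applies provided the reaction vector field points into the cone along its boundary. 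I would phrase everything in terms of the curvature operator acting on $\Lambda^{1,1}$, so that ``$\Ric\le 0$'' reads as: the trace of $\Rm$ against $g$ is a non-positive $(1,1)$-form.

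The third and genuinely hard step is the ODE/null-eigenvector analysis: one must show that whenever $g(t)$ has Griffiths non-positive Chern curvature at a point and $\Ric(g(t))$ has a null vector $v$ there (i.e.\ $R_{i\bar j}v^i\bar v^j=0$ with $\Ric\le0$), the reaction term satisfies $\mathcal{R}(\Rm,T)_{i\bar j}v^i\bar v^j\le 0$, \emph{after} using that the first-order torsion terms vanish in the direction $v$ by the first-derivative test — this is exactly the subtlety that forces the appeal to the maximum principle for the whole system (curvature plus Ricci) rather than for $\Ric$ alone, since one needs Griffiths non-positivity of $\Rm$ to survive in order to control $\mathcal{R}$. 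Concretely, I expect $\mathcal{R}(\Rm,T)_{i\bar j}v^i\bar v^j$ to split into a ``Mok/Bando-type'' quadratic curvature expression, which is $\le0$ by Griffiths non-positivity exactly as in the Kähler case, plus torsion-curvature cross terms of the form $T\ast T\ast \Rm$ which must be shown to be non-positive (or to vanish) in the null direction using the Bianchi-type identities for the Chern connection and again Griffiths non-positivity. Making the sign of these torsion cross terms work out is, I anticipate, the main obstacle, and it is presumably the reason the hypothesis is Griffiths non-positivity of the full Chern curvature rather than merely non-positive bisectional curvature.

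Finally, once the reaction field is shown to be inward-pointing along the boundary of the (time-independent, under the Uhlenbeck gauge) convex set ``$\{\Rm\ \text{Griffiths}\ \le0\}\cap\{\Ric\le0\}$'', Hamilton's maximum principle yields a $\tau\in(0,T]$ on which both conditions persist, and in particular $\Ric(g(t))\le0$ on $[0,\tau]$; undoing the bundle isomorphism transfers this back to the original metrics $g(t)$. A minor technical point to check along the way is short-time a priori control of $\Rm$ and $T$ (so that the maximum principle applies on a genuine time interval), which follows from standard parabolic estimates for the flow since $T<\infty$ is given.
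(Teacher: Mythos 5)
Your outline correctly identifies the evolution equation, the need to kill the first--order torsion terms by a good choice of extension/gauge, and the fact that the maximum principle must be run for a coupled system rather than for $\Ric$ alone. But the core of your third step contains a genuine gap. At a null direction $v$ of $\Ric$ the reaction term in the evolution of the first Ricci curvature is essentially $R_{v\bar v k}{}^{p}R_{p}{}^{k}=g^{p\bar l}g^{k\bar q}R_{v\bar v k\bar l}R_{p\bar q}$, i.e.\ the trace of the product of the two Hermitian forms $R_{v\bar v\,\cdot\,\bar\cdot}$ and $\Ric$. If both are non-positive (which is exactly what membership in your proposed invariant set $\{\Rm\ \text{Griffiths}\le 0\}\cap\{\Ric\le 0\}$ gives), this trace is \emph{non-negative} --- the wrong sign for preserving $\Ric\le 0$. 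So the ``Mok/Bando-type quadratic expression is $\le 0$ by Griffiths non-positivity exactly as in the K\"ahler case'' step fails; this sign problem is present already in the K\"ahler case, which is precisely why Liu \cite{Liu2014} (following B\"ohm--Wilking) does not argue this way. The missing idea is the auxiliary pinching inequality
\begin{equation*}
|R_{u\bar v x\bar x}|^{2}\le (1+Kt)\,R_{u\bar u}R_{v\bar v},\qquad |x|=1,
\end{equation*}
which must itself be propagated by a second maximum--principle argument (applied to a suitable $\e$-perturbation $R-\e B$). At a null vector of the perturbed Ricci this pinching forces $R_{v\bar v k\bar l}$ to be of size $O(\e)$, so the quadratic reaction term is $O(K_0\e)$ and is absorbed by the barrier $\e e^{-Kt}g$ once $K\gg K_0$. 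Without this quantity your null-eigenvector analysis cannot close, and the conclusion can only be a short-time statement (the factor $(1+Kt)$ degrades), consistent with the theorem.

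Two further points. First, you propose to include Griffiths non-positivity of the full Chern curvature in the invariant set; the paper neither proves nor needs its preservation under the $Q\equiv 0$ flow --- Griffiths non-positivity is used only at $t=0$, to verify the initial pinching via a Cauchy--Schwarz/eigenvector argument. Second, your global Uhlenbeck bundle isomorphism is a legitimate alternative to what is actually done (local extensions of the null vectors with prescribed first covariant derivatives $\nabla_p X^q=T^q_{pl}X^l$ so that the $T*\nabla\Rm$ and $S*\Rm$ terms cancel at the maximum point), but it does not by itself address the sign problem above.
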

For a more detailed result, we refer to Proposition \ref{preserve-Ric}. With the help of strong maximum principle of tensor, we have the following main result.

\begin{cor}\label{main-2}
Under the assumption in Theorem \ref{main}, if in addition $g_0$ has negative first Ricci curvature at one point, then the canonical line bundle is ample. In particular, there is a K\"ahler-Einstein metric $g=-Ric(g)$ on $M$.
\end{cor}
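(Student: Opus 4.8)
The plan is to deform $g_0$ by the flow for a short time, upgrade the non-positivity of $\Ric$ furnished by Theorem \ref{main} to \emph{strict} negativity at some positive time by means of a strong maximum principle for tensors, and then conclude via the Kodaira embedding theorem and the Aubin--Yau theorem.

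First I would record the cohomological fact underlying everything: for any Hermitian metric $h$ on $M$ the first Chern--Ricci form $\Ric(h)=-\ii\ddbar\log\det h$ is a globally well-defined closed real $(1,1)$-form, and for any two Hermitian metrics $h,h'$ the difference $\Ric(h)-\Ric(h')=-\ii\ddbar\log\big(\det h/\det h'\big)$ is $\ddbar$ of a globally defined function, so the class $[\Ric(h)]$ is a fixed positive multiple of $-c_1(K_M)$, independent of $h$. Hence it suffices to produce a single Hermitian metric $h$ on $M$ with $\Ric(h)<0$ everywhere: then $-\Ric(h)$ is a K\"ahler form representing a positive multiple of $c_1(K_M)$, so $K_M$ is a positive line bundle, hence ample by the Kodaira embedding theorem (and $M$ is projective); consequently $c_1(M)<0$, and the Aubin--Yau theorem produces the unique K\"ahler--Einstein metric $g$ with $g=-\Ric(g)$.

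To produce such an $h$, I would run the flow $\p_t g(t)=-S(g(t))$ and work on the interval $[0,\tau]$ from Theorem \ref{main}, where $H(t):=-\Ric(g(t))\ge 0$; more precisely, I would pass to the image of $H(t)$ under the bundle isomorphism of the Uhlenbeck-type trick used in the proof of Proposition \ref{preserve-Ric}, which turns the evolution of the first Ricci curvature into a genuine reaction--diffusion equation $\left(\p_t-\Delta\right)H=\Phi(H)$ whose reaction term $\Phi$ is non-negative on the null vectors of $H$. The extra hypothesis says that $H(0)$ is positive definite at a point $x_0$. Now Hamilton's strong maximum principle for tensors applies: for all sufficiently small $t>0$ the kernel of $H(t)$ is a parallel subbundle of constant rank on $M$; were it non-trivial, $H(t)$ would be degenerate at $x_0$ for all small $t>0$, which is impossible since positive-definiteness is an open condition and $H(t)\to H(0)$ at $x_0$ as $t\to 0$. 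Therefore $\Ric(g(t_1))<0$ everywhere for some $t_1\in(0,\tau]$, and taking $h=g(t_1)$ in the previous paragraph finishes the proof.

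I expect the only substantive obstacle, beyond the classical Kodaira and Aubin--Yau theorems and the standard degeneration dichotomy in the tensor maximum principle, to be the input already contained in Theorem \ref{main} / Proposition \ref{preserve-Ric}: that the evolving first Ricci curvature, after the Uhlenbeck correction, really does satisfy a parabolic equation whose reaction term has the correct sign on null directions, notwithstanding the torsion contributions hidden in $\p_t g=-S(g)$. Granting that, the openness/continuity argument above delivers strict negativity and the corollary follows.
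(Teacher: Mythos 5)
Your overall strategy coincides with the paper's: run the flow, upgrade $\Ric\le 0$ to $\Ric<0$ at a positive time via a strong maximum principle, then conclude $c_1(K_M)>0$, ampleness, and the K\"ahler--Einstein metric from Aubin--Yau. Where you genuinely diverge is in the strong maximum principle step. The paper (Theorem \ref{SMP-Ric}) follows Cao: it solves a scalar heat equation for a bump function $\phi$ supported near the point of strict negativity, so that $\phi>0$ everywhere for $t>0$, and then shows the perturbed tensor $A^\e=\Ric+e^{-kt}\phi^2 g-\e e^{Bt}g$ stays non-positive by a first-touching-point contradiction; this only uses the weak maximum principle for tensors plus the scalar strong maximum principle, and handles the torsion terms by the same vector-field extension trick as in Lemma \ref{almost-preserve}. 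You instead invoke Hamilton's strong maximum principle for tensors (constant rank, parallel kernel for small $t>0$) together with the openness of positive-definiteness at $x_0$. The paper explicitly acknowledges your route as viable (it points to Ustinovskiy's Theorem 5.2 for the kernel of the Ricci curvature), so this is a legitimate alternative, but it carries two burdens you only gesture at: first, the reaction term $R_{i\bar jk}{}^pR_p{}^k$ is not a function of $\Ric$ alone but of the full Chern curvature, so the null-eigenvector condition must be verified along the solution --- it does hold, but only because the pinching condition (b) of Proposition \ref{preserve-Ric} forces $R_{X\bar Xk\bar l}=0$ whenever $R_{X\bar X}=0$ (via polarization), so you are implicitly using more of Proposition \ref{preserve-Ric} than just $\Ric\le 0$; second, you must check that the torsion first-order terms $T*\nabla\Ric$, which contract the tensor index rather than appearing as a pure drift, really are absorbed by the Uhlenbeck-type bundle isomorphism into a genuine reaction--diffusion equation to which Hamilton's theorem applies. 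The Cao-style barrier argument in the paper sidesteps both issues at the cost of constructing the auxiliary subsolution. Granting those two verifications, your proof is correct.
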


In the non-\K Hermitian geometry,  there are various types of Ricci curvature associated to the Chern-curvature tensor due to the presence of torsion. The first Ricci curvature (or called Chern-Ricci curvature) and the second Ricci curvature is particularly  interesting. In view of the curvature assumptions, it seems to be  more natural to consider the Hermitian curvature flow introduced in \cite{StreetTian2011} rather than the Chern-Ricci flow introduced by Gill \cite{Gill2011}. Relatively, the Chern-Ricci flow seems to be more sensitive to the information from Bott-Chern class rather than differential geometric quantities. 

The paper is organized as follows: In section \ref{background}, we will collect some useful information about the Chern connection. In section 3, we will derive evolution equations for the Hermitian curvature flow. In section 4, we prove the preservation of non-positivity of first Ricci curvature, $Ric(g(t))$. In section 5, we give a proof on Corollary \ref{main-2}.

{\it Acknowledgement}: The author would like to thank Professor Jeffrey Streets for his interest in this work. He would like to thank Xiaokui Yang for pointing out the related results in  \cite{BCHM2010}. Lastly, he would like to thank Yury Ustinovskiy for patiently answering his questions.

\section{Chern connection}\label{background}

In this section, we collect some useful formulas for the Chern connection. Those materials can be found in \cite{TosattiWeinkove2015,ShermanWeinkove2013}. Let $(M,g)$ be a Hermitian manifold. The {\it Chern connection} of $g$ is defined as follows: In local holomorphic coordinates $z^i$,
 for a vector field $X^i\p_i$ where $\p_i:=\frac{\p}{\p z^i}$, $\p_{\bar i}=\frac{\p}{\p \bar z^i}$,
 $$\nabla_iX^k=\p_{i}X^k+\Gamma_{ij}^kX^j;\ \nabla_{\bar i}X^k=\p_{\bar i}X^k.
 $$
 For a $(1,0)$ form $a=a_idz^i$,
 $$
 \nabla_ia_j=\p_i a_j-\Gamma_{ij}^ka_k; \ \nabla_{\bar i}a_j=\p_{\bar i}a_j.
 $$
 Here $\nabla_i:=\nabla_{\p_i}$, etc. $\Gamma$ are the coefficients of $\nabla$,
with
$$\Gamma_{ij}^k=g^{k\bar l}\partial_i g_{j\bar l}.$$
Note that Chern connection is a connection such that $\nabla g=\nabla J=0$ and the torsion has no $(1,1)$ component.   The {\it torsion} of $g$ is defined to be
$$T_{ij}^k=\Gamma_{ij}^k-\Gamma_{ji}^k.$$
We remark that $g$ is \K if and only if $T\equiv 0$. Define the \textit{Chern curvature tensor} of $g$ to be
$$R_{i\bar jk}\,^l=-\partial_{\bar j}\Gamma_{ik}^l.$$
We raise and lower indices by using metric $g$. {Direct computations show:
$$
\ol{R_{i\bar jk\bar l}}=R_{j\bar il\bar k}.
$$}

The Chern-Ricci curvature (first Ricci curvature) is defined by
$$R_{i\bar j}=g^{k\bar l}R_{i\bar j k\bar l}=-\partial_i \partial_{\bar j}\log \det g.$$

And the second Ricci curvature is defined by $S_{i\bar j}=g^{k\bar l}R_{ k\bar li\bar j}$. Note that if $g$ is not K\"ahler, then $R_{i\bar j}$ is not necessarily equal to $S_{i\bar j}$.

\begin{lma}\label{Ricci-id}
The commutation formulas for the Chern curvature are given by
\begin{align*}
[\nabla_i,\nabla_{\bar j}]X^l=R_{i\bar j k}\,^l X^k,\quad\quad [\nabla_i,\nabla_{\bar j}]a_k=-R_{i\bar j k}\,^l a_l;\\
[\nabla_i,\nabla_{\bar j}]X^{\bar l}=-R_{i\bar j}\,^{\bar l}\,_{\bar k} X^{\bar k},\quad\quad [\nabla_i,\nabla_{\bar j}]a_{\bar k}=R_{i\bar j}\,^{\bar l}\,_{\bar k} a_{\bar l}.
\end{align*}
\end{lma}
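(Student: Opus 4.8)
The plan is to verify each of the four identities by a direct local computation, using two elementary features of the Chern connection recorded above. First, in an anti-holomorphic direction the Chern connection acts on $(1,0)$-tensors simply as $\bar\partial$; that is, $\nabla_{\bar j}X^k=\partial_{\bar j}X^k$ and $\nabla_{\bar j}a_k=\partial_{\bar j}a_k$ with no Christoffel terms, and dually in a holomorphic direction it acts on $(0,1)$-tensors as $\partial$, since the Chern Christoffel symbols have no ``mixed'' components $\Gamma_{i\bar j}^{\,\cdot}=\Gamma_{\bar i j}^{\,\cdot}=0$. Second, the defining formula $R_{i\bar jk}{}^l=-\partial_{\bar j}\Gamma_{ik}^l$. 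Note also that $[\partial_i,\partial_{\bar j}]=0$, so no $\nabla_{[\partial_i,\partial_{\bar j}]}$ term enters.

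For the first identity I would expand, using that only the upper holomorphic index of $\partial_{\bar j}X^l$ picks up a connection term,
\[
\nabla_i\nabla_{\bar j}X^l=\nabla_i\bigl(\partial_{\bar j}X^l\bigr)=\partial_i\partial_{\bar j}X^l+\Gamma_{im}^l\,\partial_{\bar j}X^m ,
\]
while on the other side
\[
\nabla_{\bar j}\nabla_iX^l=\partial_{\bar j}\bigl(\partial_iX^l+\Gamma_{ik}^lX^k\bigr)=\partial_{\bar j}\partial_iX^l+(\partial_{\bar j}\Gamma_{ik}^l)X^k+\Gamma_{ik}^l\,\partial_{\bar j}X^k .
\]
Subtracting, the mixed second derivatives and the two $\Gamma\,\partial X$ terms cancel, leaving $[\nabla_i,\nabla_{\bar j}]X^l=-(\partial_{\bar j}\Gamma_{ik}^l)X^k=R_{i\bar jk}{}^lX^k$. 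The computation for $a_k$ has the same shape, but now it is the \emph{lower} holomorphic index that contributes a $-\Gamma$ on both sides; the cancellations leave $[\nabla_i,\nabla_{\bar j}]a_k=(\partial_{\bar j}\Gamma_{ik}^l)a_l=-R_{i\bar jk}{}^la_l$, the opposite sign as claimed.

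For the two anti-holomorphic identities I would not repeat the computation but instead conjugate. Since $\overline{\Gamma_{ij}^k}=\Gamma_{\bar i\bar j}^{\bar k}$, the Chern connection is conjugation-equivariant, $\overline{\nabla_iY}=\nabla_{\bar i}\,\overline{Y}$, so applying complex conjugation to $[\nabla_i,\nabla_{\bar j}]a_k=-R_{i\bar jk}{}^la_l$ gives $[\nabla_{\bar i},\nabla_j]a_{\bar k}=-\overline{R_{i\bar jk}{}^l}\,a_{\bar l}$; rewriting the bracket and relabelling indices yields $[\nabla_i,\nabla_{\bar j}]a_{\bar k}=\overline{R_{j\bar ik}{}^l}\,a_{\bar l}$. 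It then remains only to identify $\overline{R_{j\bar ik}{}^l}$ with $R_{i\bar j}{}^{\bar l}{}_{\bar k}$: lowering the raised index, $\overline{R_{j\bar ik}{}^l}=g^{\bar lm}\,\overline{R_{j\bar ik\bar m}}=g^{\bar lm}R_{i\bar jm\bar k}$ by the conjugation symmetry $\overline{R_{i\bar jk\bar l}}=R_{j\bar il\bar k}$, and $g^{\bar lm}R_{i\bar jm\bar k}$ is precisely $R_{i\bar j}{}^{\bar l}{}_{\bar k}$ by the index-raising convention. The identity for $X^{\bar l}$ follows in exactly the same way from the one for $X^l$. There is no genuine obstacle here; the only thing requiring care is bookkeeping — tracking which slot each Christoffel symbol attaches to, and checking that the conjugation symmetry of $R_{i\bar jk\bar l}$ is compatible with the placement of the raised barred index.
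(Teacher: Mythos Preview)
Your proof is correct. The paper does not actually supply a proof of this lemma: it is stated as a standard commutation formula with a citation to \cite{TosattiWeinkove2015,ShermanWeinkove2013}, so there is nothing to compare against. Your direct computation for the first two identities is exactly the expected one, and deducing the barred identities by complex conjugation (using $\overline{\nabla_i}=\nabla_{\bar i}$ and the symmetry $\overline{R_{i\bar jk\bar l}}=R_{j\bar il\bar k}$) is the cleanest way to finish; the index-raising check $\overline{R_{j\bar ik}{}^l}=g^{\bar lm}R_{i\bar jm\bar k}=R_{i\bar j}{}^{\bar l}{}_{\bar k}$ is right.
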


When $g$ is not K\"ahler, the Bianchi identities may fail. The failure can be measured by the torsion tensor.

\begin{lma}\label{l-Chern-connection-1}
In a holomorphic local coordinates, let $T_{ij\bar k}=g_{p\bar k}T_{ij}^p$,  we have
\begin{align*}
R_{i\bar jk\bar l}-R_{k\bar ji\bar l}&=-\nabla_{\bar j}T_{ik\bar l},\\
R_{i\bar jk\bar l}-R_{i\bar lk\bar j}&=-\nabla_{i}T_{\bar j\bar lk},\\
R_{i\bar jk\bar l}-R_{k\bar li\bar j}&=-\nabla_{\bar j}T_{ik\bar l}-\nabla_kT_{\bar j\bar li}=-\nabla_iT_{\bar j\bar lk}-\nabla_{\bar l}T_{ik\bar j},\\
\nabla_pR_{i\bar jk\bar l}-\nabla_iR_{p\bar jk\bar l}&=-T_{pi}^rR_{r\bar jk\bar l},\\
\nabla_{\bar q}R_{i\bar jk\bar l}-\nabla_{\bar j}R_{i\bar qk\bar l}&=-T_{\bar q\bar j}^{\bar s}R_{i\bar sk\bar l}.
\end{align*}
\end{lma}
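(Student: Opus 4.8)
The plan is to establish all five identities by direct computation in a fixed holomorphic chart, using only the definitions $\Gamma_{ik}^l=g^{l\bar m}\partial_i g_{k\bar m}$, $T_{ik}^p=\Gamma_{ik}^p-\Gamma_{ki}^p$, $R_{i\bar jk}{}^l=-\partial_{\bar j}\Gamma_{ik}^l$, together with the facts that the Chern connection is compatible with $g$ and with complex conjugation and has vanishing mixed Christoffel symbols. First I would record the two elementary consequences $T_{ik\bar l}=\partial_i g_{k\bar l}-\partial_k g_{i\bar l}$ and the standard local expression
\[
R_{i\bar jk\bar l}=-\partial_i\partial_{\bar j}g_{k\bar l}+g^{p\bar q}\,\partial_i g_{k\bar q}\,\partial_{\bar j}g_{p\bar l},
\]
obtained by expanding $R_{i\bar jk}{}^l=-\partial_{\bar j}\Gamma_{ik}^l$ and lowering the last index.

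For the first identity I would subtract this expression from its $(i\leftrightarrow k)$ counterpart: the pure second-derivative part assembles into $-\partial_{\bar j}T_{ik\bar l}$ and the rest into $g^{p\bar q}(\partial_{\bar j}g_{p\bar l})\,T_{ik\bar q}$. Comparing with the expansion $\nabla_{\bar j}T_{ik\bar l}=\partial_{\bar j}T_{ik\bar l}-\Gamma_{\bar j\bar l}^{\bar m}T_{ik\bar m}$ (the holomorphic slots $i,k$ carry no Christoffel term under an anti-holomorphic derivative) together with $\Gamma_{\bar j\bar l}^{\bar m}=g^{q\bar m}\partial_{\bar j}g_{q\bar l}$ then gives exactly $R_{i\bar jk\bar l}-R_{k\bar ji\bar l}=-\nabla_{\bar j}T_{ik\bar l}$. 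The second identity comes for free by conjugating the first, using $\overline{R_{i\bar jk\bar l}}=R_{j\bar il\bar k}$, $\overline{T_{ik\bar l}}=T_{\bar i\bar kl}$, $\overline{\nabla_{\bar j}(\cdot)}=\nabla_j\overline{(\cdot)}$, and relabeling. The third identity needs no new computation: writing $R_{i\bar jk\bar l}-R_{k\bar li\bar j}=(R_{i\bar jk\bar l}-R_{k\bar ji\bar l})+(R_{k\bar ji\bar l}-R_{k\bar li\bar j})$ and invoking the first and second identities (with appropriate index substitutions) produces $-\nabla_{\bar j}T_{ik\bar l}-\nabla_kT_{\bar j\bar li}$, while the alternative grouping $(R_{i\bar jk\bar l}-R_{i\bar lk\bar j})+(R_{i\bar lk\bar j}-R_{k\bar li\bar j})$ produces $-\nabla_iT_{\bar j\bar lk}-\nabla_{\bar l}T_{ik\bar j}$; the two are automatically equal since both equal $R_{i\bar jk\bar l}-R_{k\bar li\bar j}$.

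For the fourth, differential identity the preliminary point is the vanishing of the $(2,0)$-part of the Chern curvature, which at the level of Christoffel symbols reads
\[
\partial_p\Gamma_{ik}^l-\partial_i\Gamma_{pk}^l=\Gamma_{ir}^l\Gamma_{pk}^r-\Gamma_{pr}^l\Gamma_{ik}^r;
\]
this follows at once from $\Gamma_{ik}^l=g^{l\bar m}\partial_i g_{k\bar m}$, the formula $\partial_p g^{l\bar m}=-\Gamma_{pr}^l g^{r\bar m}$, and the commutation $\partial_p\partial_i g_{k\bar m}=\partial_i\partial_p g_{k\bar m}$. I would then expand $\nabla_pR_{i\bar jk}{}^l-\nabla_iR_{p\bar jk}{}^l$ into its $\partial$-terms and its Christoffel$\,\times\,$curvature terms; the $\partial$-difference equals $-\partial_{\bar j}\big(\partial_p\Gamma_{ik}^l-\partial_i\Gamma_{pk}^l\big)$, into which I substitute the displayed identity and then replace $\partial_{\bar j}\Gamma=-R$ throughout. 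The upshot is that the eight Christoffel$\,\times\,$curvature terms cancel in four pairs, leaving precisely $-T_{pi}^rR_{r\bar jk}{}^l$; lowering the index $l$ completes the fourth identity, and conjugation — exactly as for the second — gives the fifth.

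Every step is mechanical; the only genuine source of difficulty is sign and index bookkeeping, above all the paper's convention $R_{i\bar jk}{}^l=-\partial_{\bar j}\Gamma_{ik}^l$, the rule for which slots of a mixed-type tensor such as $T_{ik\bar l}$ or $T_{\bar j\bar lk}$ pick up Christoffel terms under $\nabla$, and the conjugation identities $\overline{R_{i\bar jk\bar l}}=R_{j\bar il\bar k}$, $\overline{T_{ik\bar l}}=T_{\bar i\bar kl}$. I expect the four-fold pairwise cancellation in the fourth identity to be the most error-prone bookkeeping, though it poses no conceptual obstacle; indeed all of these formulas are standard and can also be found in \cite{TosattiWeinkove2015,ShermanWeinkove2013}.
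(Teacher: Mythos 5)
Your proof is correct: I checked the local expansion $R_{i\bar jk\bar l}=-\partial_i\partial_{\bar j}g_{k\bar l}+g^{p\bar q}\partial_ig_{k\bar q}\partial_{\bar j}g_{p\bar l}$, the identification of the leftover first-order terms with the $\Gamma_{\bar j\bar l}^{\bar m}$ correction in $\nabla_{\bar j}T_{ik\bar l}$, the vanishing of the $(2,0)$-curvature used for the fourth identity (whose eight Christoffel--curvature terms do cancel in pairs, leaving only $-T_{pi}^rR_{r\bar jk}{}^l$ from $-\Gamma_{pi}^r+\Gamma_{ip}^r$), and the conjugation/telescoping steps for the second, third and fifth identities. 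The paper itself gives no proof of this lemma, deferring to \cite{TosattiWeinkove2015,ShermanWeinkove2013}; your computation is exactly the standard argument found there, so nothing further is needed.
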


It can be checked easily that for $X,Y\in T^{1,0}M$, $R(X,\bar X, Y,\bar Y)$ is real-valued. We consider the following curvature condition.
\begin{defn}\label{BK-def}
We say that $(M,g)$ has Griffiths non-positive Chern curvature if there is non-positive function $\kappa(p)$ such that for any $p\in M$, $X,Y\in T^{1,0}_pM$,
$$R(X,\bar X,Y,\bar Y)\leq \kappa B(X,\bar X, Y,\bar Y) $$
where $B_{i\bar jk\bar l}=g_{i\bar j} g_{k\bar l}+g_{i\bar l}g_{k\bar j}$. We will denote it by $\mathrm{BK}\leq \kappa $ if $\kappa$ is a function.
\end{defn}

\begin{defn}
We say that $(M,g)$ has first Ricci curvature bounded above by a function $\kappa(x)$ if for any $p\in M$, $X\in T_p^{1,0}M$, 
$$Ric(X,\bar X)\leq \kappa(p) g(X,\bar X).$$
If $\kappa$ is non-positive and negative at some point $z\in M$, then we say that $g$ has quasi-negative first Ricci curvature.
\end{defn}
In this note, all the curvature tensor $\Rm$ will be referring to the curvature tensor with respect to Chern connection.

\section{evolution equation along the Hermitian curvature flow}
In this section, we will discuss a special type of Hermitian curvature flow introduced by \cite{StreetTian2011} with $Q\equiv 0$: 
\be\label{HRF}
\left\{
  \begin{array}{ll}
    \frac{\p}{\p t}g_\ijb=  &  -S_{i\bar j}; \\
    g(0)=  &g_0.
  \end{array}
\right.
\ee
Here $S_{i\bar j}=g^{k\bar l}R_{k\bar li\bar j}$ is the second Ricci curvature with respect to the Chern connection while the Chern-Ricci curvature (or first Ricci curvature) is defined by $R_{i\bar j}=g^{k\bar l}R_{i\bar jk\bar l}$. When the metric is K\"ahler, the first and second Ricci curvature coincides with the Riemannian Ricci curvature. However they are all different in general.

Now we derive some evolution equation for $R_{i\bar j k\bar l}$. This can be found in \cite[Section 6]{StreetTian2011}. We include the computation here for reader's convenience.
\begin{lma}Along the Hermitian curvature flow \eqref{HRF}, we have 
\begin{equation}\label{Rm-evo}
\begin{split}
\partial_t R_{i\bar j k\bar l}
&=\Delta R_{i\bar jk\bar l}+g^{r\bar s}\Big[T^p_{ri}\,\nabla_{\bar s} R_{p\bar jk\bar l}+T^{\bar q}_{\bar s \bar j}\,\nabla_r R_{i\bar qk\bar l}+T^p_{ri}T^{\bar q}_{\bar s\bar j}R_{p\bar q k\bar l}\\
&\quad  +R_{i\bar j r}\,^pR_{p\bar sk\bar l}+R_{r\bar j k}\,^p R_{i\bar s p\bar l} -R_{r\bar jp\bar l} R_{i\bar s k}\,^p\Big]\\
&\quad -\frac{1}{2}\left[S^p_i R_{p\bar jk\bar l}+S^p_k R_{i\bar jp\bar l}+S^{\bar q}_{\bar j}R_{i\bar qk\bar l}+S^{\bar q}_{\bar l}R_{i\bar j k\bar q} \right]
\end{split}
\end{equation}
where $\Delta=\Delta_{g(t)}$.
\end{lma}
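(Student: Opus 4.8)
The plan is a direct variation-of-connection computation in local holomorphic coordinates, following \cite[Section 6]{StreetTian2011}; since \eqref{HRF} is the $Q\equiv 0$ case of the Streets--Tian flow, the work is essentially bookkeeping in the conventions of Section \ref{background}. First I would differentiate the Chern connection: from $\Gamma^l_{ik}=g^{l\bar m}\p_i g_{k\bar m}$ and $\p_t g_{k\bar m}=-S_{k\bar m}$ one checks that the variation $\p_t\Gamma^l_{ik}$ is a tensor, equal to $-\nabla_i S_k{}^l$. Since $R_{i\bar jk}{}^l=-\p_{\bar j}\Gamma^l_{ik}$ and $\nabla_{\bar j}$ agrees with $\p_{\bar j}$ on tensors carrying only holomorphic indices, this gives $\p_t R_{i\bar jk}{}^l=\nabla_{\bar j}\nabla_i S_k{}^l$; lowering the last index with $g$ (and using $\nabla g=0$, $\p_t g_{p\bar l}=-S_{p\bar l}$) yields
\[
\p_t R_{i\bar jk\bar l}=\nabla_{\bar j}\nabla_i S_{k\bar l}-S^{\bar q}{}_{\bar l}R_{i\bar jk\bar q}.
\]

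Next I would turn $\nabla_{\bar j}\nabla_i S_{k\bar l}$ into a Laplacian plus lower-order terms. Writing $S_{k\bar l}=g^{r\bar s}R_{r\bar sk\bar l}$, we have $\nabla_{\bar j}\nabla_i S_{k\bar l}=g^{r\bar s}\nabla_{\bar j}\nabla_i R_{r\bar sk\bar l}$. Using the second-Bianchi-type identities of Lemma \ref{l-Chern-connection-1} I would first trade $\nabla_i$ for $\nabla_r$ (picking up $T^p_{ri}R_{p\bar sk\bar l}$), then commute $\nabla_{\bar j}$ past $\nabla_r$ via Lemma \ref{Ricci-id} (this produces the quadratic-curvature terms, and — once $g^{r\bar s}$ is contracted — the $S\cdot\Rm$ reaction terms), and finally trade $\nabla_{\bar j}$ for $\nabla_{\bar s}$ via the conjugate identity (picking up $T^{\bar q}_{\bar s\bar j}R_{i\bar qk\bar l}$), leaving $g^{r\bar s}\nabla_r\nabla_{\bar s}R_{i\bar jk\bar l}=\Delta R_{i\bar jk\bar l}$. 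The torsion factors released along the way contract with $g^{r\bar s}$ into the first-order terms $g^{r\bar s}T^p_{ri}\nabla_{\bar s}R_{p\bar jk\bar l}$ and $g^{r\bar s}T^{\bar q}_{\bar s\bar j}\nabla_r R_{i\bar qk\bar l}$ and the torsion-square term $g^{r\bar s}T^p_{ri}T^{\bar q}_{\bar s\bar j}R_{p\bar qk\bar l}$. The leftover terms — those like $g^{r\bar s}(\nabla_{\bar j}T^p_{ri})R_{p\bar sk\bar l}$ from differentiating the torsion, together with curvature tensors whose first and third holomorphic indices have been swapped by the commutators — I would rewrite purely in curvature using the first three identities of Lemma \ref{l-Chern-connection-1} (e.g. $\nabla_{\bar j}T_{ik\bar l}=R_{k\bar ji\bar l}-R_{i\bar jk\bar l}$), then collect everything; combining the $-S^{\bar q}{}_{\bar l}R_{i\bar jk\bar q}$ above with the $S\cdot\Rm$ terms produced along the way gives the symmetric reaction block $-\tfrac12[S^p_iR_{p\bar jk\bar l}+S^p_kR_{i\bar jp\bar l}+S^{\bar q}_{\bar j}R_{i\bar qk\bar l}+S^{\bar q}_{\bar l}R_{i\bar jk\bar q}]$, and hence \eqref{Rm-evo}.

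Nothing here is conceptually hard; the obstacle is entirely the bookkeeping. Because $g$ is non-K\"ahler one cannot invoke the full first Bianchi symmetry of $\Rm$, the second Ricci tensor $S_{i\bar j}$ is not symmetric, and every index raise or derivative commutation drags in a torsion correction, so the many sign and index-placement choices must be tracked with care. The genuinely delicate point is verifying that the $\nabla T\cdot\Rm$ terms and the swapped-index curvature terms cancel against each other and against the leftover commutator terms, so as to leave precisely the three displayed quadratic-curvature terms, the single torsion-square term, and the $-\tfrac12$ reaction block with exactly those coefficients; I would cross-check the outcome against \cite[Section 6]{StreetTian2011}. One must also fix the convention for $\Delta=\Delta_{g(t)}$ at the outset, since $g^{r\bar s}\nabla_r\nabla_{\bar s}$ and $g^{r\bar s}\nabla_{\bar s}\nabla_r$ differ by a term involving $S$ that feeds back into the reaction block.
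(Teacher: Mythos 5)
Your proposal is correct and follows essentially the same route as the paper: differentiate the Chern connection to get $\partial_t R_{i\bar jk}{}^l=\nabla_{\bar j}\nabla_i S_k{}^l$, then use the torsion identities of Lemma \ref{l-Chern-connection-1} and the commutators of Lemma \ref{Ricci-id} to convert $g^{r\bar s}\nabla_{\bar j}\nabla_i R_{r\bar sk}{}^l$ into the symmetrized Laplacian plus the displayed torsion and reaction terms. The only cosmetic difference is that you lower the $\bar l$ index (and collect its $-S^{\bar q}{}_{\bar l}R_{i\bar jk\bar q}$ contribution) at the outset, whereas the paper keeps $R_{i\bar jk}{}^l$ throughout and combines with \eqref{HRF} at the end; you also correctly identify the two delicate points (absorbing the $\nabla T\cdot\Rm$ terms via the Bianchi-type identities, and the $\tfrac12$ from the Laplacian convention).
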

\begin{proof}
\begin{equation}
\begin{split}\label{curv1}
\partial_t R_{i\bar j k}\,^l
&=-\partial_t \partial_{\bar j}\Gamma_{ik}^l\\
&=-\partial_{\bar j} \left(g^{l\bar q}\partial_i g_{k\bar q} \right)'\\
&=\partial_{\bar j} \left(g^{l\bar q} \partial_i S_{k\bar q}-S^{l\bar q} \Gamma_{ik}^p g_{p\bar q} \right)\\
&=\nabla_{\bar j}\nabla_i S_{k}^l\\
&=g^{r\bar s}\nabla_{\bar j}\nabla_i R_{r\bar s k}\,^l.
\end{split}
\end{equation}

By Lemma \ref{l-Chern-connection-1}, we have 
\begin{align*}
\nabla_i R_{r\bar s k}\,^l
&=\nabla_r R_{i\bar sk}\,^l+T_{ri}^p\, R_{p\bar s k}\,^l.
\end{align*}
And hence, 
\begin{equation}
\begin{split}\label{curv2}
\nabla_{\bar j} \nabla_i R_{r\bar sk}\,^l
&=\nabla_{\bar j}\nabla_r R_{i\bar sk}\,^l+\nabla_{\bar j}\left(T_{ri}^p\, R_{p\bar s k}\,^l\right)\\
&=\nabla_r \nabla_{\bar j} R_{i\bar sk}\,^l+R_{r\bar ji}\,^pR_{p\bar sk}\,^l+R_{r\bar jk}\,^p R_{i\bar s p }\,^l-R_{r\bar jp}\,^l R_{i\bar sk}\,^p\\
&\quad -R_{r\bar j}\,^{\bar q}\,_{\bar s}R_{i\bar q k}\,^l+(\partial_{\bar j}T^p_{ri}) R_{p\bar sk}\,^l+T^p_{ri}\, \nabla_{\bar j}R_{p\bar s k}\,^l\\
&=\nabla_r \nabla_{\bar s}R_{i\bar j k}\,^l+(\partial_r T_{\bar s\bar j}^{\bar q}) R_{i\bar qk}\,^l+T^{\bar q}_{\bar s \bar j}\nabla_r R_{i\bar qk}\,^l\\
&\quad +R_{r\bar ji}\,^pR_{p\bar sk}\,^l+R_{r\bar jk}\,^p R_{i\bar s p }\,^l-R_{r\bar jp}\,^l R_{i\bar sk}\,^p\\
&\quad -R_{r\bar j}\,^{\bar q}\,_{\bar s}R_{i\bar q k}\,^l+(\partial_{\bar j}T^p_{ri}) R_{p\bar sk}\,^l+T^p_{ri}\, \nabla_{\bar j}R_{p\bar s k}\,^l\\
&=\nabla_r \nabla_{\bar s}R_{i\bar j k}\,^l+T^p_{ri}\,\nabla_{\bar s} R_{p\bar jk}\,^l+T^{\bar q}_{\bar s \bar j}\,\nabla_r R_{i\bar qk}\,^l+T^p_{ri}T^{\bar q}_{\bar s\bar j}R_{p\bar q k}\,^l\\
&\quad  -R_{r\bar s}\,^{\bar q}\,_{\bar j}R_{i\bar qk}\,^l+R_{i\bar j r}\,^pR_{p\bar sk}\,^l+R_{r\bar j k}\,^p R_{i\bar s p}\,^l -R_{r\bar jp}\,^l R_{i\bar s k}\,^p.
\end{split}
\end{equation}

By  Lemma \ref{Ricci-id}, 
\begin{equation}
\begin{split}
\nabla_i \nabla_{\bar j} R_{r\bar sk}\,^l&=\nabla_{\bar j} \nabla_i R_{r\bar sk}\,^l-R_{i\bar j r}\,^p R_{p\bar sk}\,^l-R_{i\bar j k}\,^p R_{r\bar sp}\,^l\\
&\quad +R_{i\bar j}\,^{\bar q}\,_{\bar s}R_{r\bar qk}\,^l+R_{i\bar j p}\,^l R_{r\bar s k}\,^p.
\end{split}
\end{equation}

The result follows by combining this with \eqref{curv1}, \eqref{curv2} and  \eqref{HRF}.
\end{proof}

By tracing $k$ and $l$, we arrive at the evolution equation of the first Ricci curvature (or first Ricci curvature).  
\begin{lma}\label{Ricci-evo}Along the Hermitian curvature flow, we have the following evolution equation for the first Ricci curvature.
\begin{equation}
\begin{split}\partial_t R_{i\bar j}=&\Delta R_{i\bar j}+g^{r\bar s} (T^p_{ri} \nabla_{\bar s} R_{p\bar j}+T^{\bar q}_{\bar s\bar j}\nabla_r R_{i\bar q}+T^p_{ri}T^{\bar q}_{\bar s\bar j}R_{p\bar q})\\
&+R_{i\bar jk}\,^p R_{p}\,^k-\frac{1}{2}\left[S^p_i R_{p\bar j} +S^{\bar q}_{\bar j}R_{i\bar q} \right].
\end{split}
\end{equation}
\end{lma}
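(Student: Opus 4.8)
The plan is to trace the evolution equation \eqref{Rm-evo} over the indices $k,\bar l$ against the time-dependent inverse metric $g^{k\bar l}$. Since $\partial_t g_{i\bar j}=-S_{i\bar j}$, differentiating $g^{k\bar l}g_{k\bar m}=\delta^{\bar l}_{\bar m}$ gives $\partial_t g^{k\bar l}=S^{k\bar l}$, where the indices on $S$ are raised with $g$, so that
\[
\partial_t R_{i\bar j}=\partial_t\big(g^{k\bar l}R_{i\bar jk\bar l}\big)=S^{k\bar l}R_{i\bar jk\bar l}+g^{k\bar l}\,\partial_t R_{i\bar jk\bar l}.
\]
I would then substitute \eqref{Rm-evo} into the last term. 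Because the Chern connection satisfies $\nabla g\equiv 0$, the factor $g^{k\bar l}$ passes through $\Delta$, $\nabla_{\bar s}$ and $\nabla_r$: this turns $g^{k\bar l}\Delta R_{i\bar jk\bar l}$ into $\Delta R_{i\bar j}$, turns the three torsion terms into $g^{r\bar s}\big(T^p_{ri}\nabla_{\bar s}R_{p\bar j}+T^{\bar q}_{\bar s\bar j}\nabla_r R_{i\bar q}+T^p_{ri}T^{\bar q}_{\bar s\bar j}R_{p\bar q}\big)$, and turns $-\tfrac12\big(S^p_iR_{p\bar jk\bar l}+S^{\bar q}_{\bar j}R_{i\bar qk\bar l}\big)$ into $-\tfrac12\big(S^p_iR_{p\bar j}+S^{\bar q}_{\bar j}R_{i\bar q}\big)$. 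Tracing $g^{r\bar s}R_{i\bar jr}{}^pR_{p\bar sk\bar l}$ produces $g^{r\bar s}R_{i\bar jr}{}^pR_{p\bar s}=R_{i\bar jk}{}^pR_p{}^k$. Together these are exactly the terms in the asserted formula.

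The remaining task is to see that all other traced terms cancel. First, tracing the pair $g^{r\bar s}\big(R_{r\bar jk}{}^pR_{i\bar sp\bar l}-R_{r\bar jp\bar l}R_{i\bar sk}{}^p\big)$ against $g^{k\bar l}$ and lowering the single raised index in each factor, the two resulting scalar contractions coincide after a relabelling of dummy indices, so their difference vanishes. Second, tracing $-\tfrac12S^p_kR_{i\bar jp\bar l}$ and $-\tfrac12S^{\bar q}_{\bar l}R_{i\bar jk\bar q}$ against $g^{k\bar l}$: each reduces, after lowering and relabelling, to $-\tfrac12S_p{}^kR_{i\bar jk}{}^p$, so together they contribute $-S_p{}^kR_{i\bar jk}{}^p$; this cancels exactly against the term $S^{k\bar l}R_{i\bar jk\bar l}=S_p{}^kR_{i\bar jk}{}^p$ produced above from $\partial_t g^{k\bar l}$. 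Collecting what survives gives the stated identity.

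The computation is routine; the only points that need care are the sign in $\partial_t g^{k\bar l}=S^{k\bar l}$ and the two cancellations, both of which come down to correctly raising and lowering the barred and unbarred indices of the (non-symmetric) Chern curvature tensor. No Bianchi-type identity from Lemma~\ref{l-Chern-connection-1} is needed here; those were already used in deriving \eqref{Rm-evo}.
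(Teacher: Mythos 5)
Your proposal is correct and is essentially identical to the paper's own proof: the paper likewise writes $\partial_t R_{i\bar j}=S^{k\bar l}R_{i\bar jk\bar l}+g^{k\bar l}\partial_t R_{i\bar jk\bar l}$, substitutes \eqref{Rm-evo}, and observes the same two cancellations (the pair $g^{k\bar l}R_{r\bar jk}{}^pR_{i\bar sp\bar l}-g^{k\bar l}R_{r\bar jp\bar l}R_{i\bar sk}{}^p$ vanishing by relabelling, and the traced $-\tfrac12 S*R$ terms cancelling $S^{k\bar l}R_{i\bar jk\bar l}$). All the index manipulations you describe, including the sign of $\partial_t g^{k\bar l}$, check out.
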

\begin{proof}
\begin{equation}
\begin{split}
\partial_t R_{i\bar j}&=\partial_t( g^{k\bar l}R_{i\bar jk\bar l})\\
&=S^{k\bar l}R_{i\bar jk\bar l}+g^{k\bar l}\partial_tR_{i\bar jk\bar l}\\
&=\Delta R_{i\bar j}+g^{r\bar s}\Big[T^p_{ri}\,\nabla_{\bar s} R_{p\bar j}+T^{\bar q}_{\bar s \bar j}\,\nabla_r R_{i\bar q}+T^p_{ri}T^{\bar q}_{\bar s\bar j}R_{p\bar q }\\
&\quad  +R_{i\bar j r}\,^pR_{p\bar s}+g^{k\bar l}R_{r\bar j k}\,^p R_{i\bar s p\bar l} -g^{k\bar l}R_{r\bar jp\bar l} R_{i\bar s k}\,^p\Big]\\
&\quad -\frac{1}{2}\left[S^p_i R_{p\bar j}+S^{\bar q}_{\bar j}R_{i\bar q} \right]\\
&=\Delta R_{i\bar j}+g^{r\bar s} (T^p_{ri} \nabla_{\bar s} R_{p\bar j}+T^{\bar q}_{\bar s\bar j}\nabla_r R_{i\bar q}+T^p_{ri}T^{\bar q}_{\bar s\bar j}R_{p\bar q})\\
&\quad+R_{i\bar jk}\,^p R_{p}\,^k-\frac{1}{2}\left[S^p_i R_{p\bar j} +S^{\bar q}_{\bar j}R_{i\bar q} \right].
\end{split}
\end{equation}
\end{proof}

\section{Preservation of curvature conditions}
In this section, we will adapt the idea by Liu in \cite{Liu2014} to the Hermitian curvature flow to show that the non-positivity of first Ricci curvature will be preserved for a short time. The main idea is to observe that the extra torsion term $T*\nabla \Rm$ and curvature coupled term $S*\Rm$ appeared in the evolution of $\Rm$ can be regarded as connection term which can be cancelled by appropriate choice of vector field extension when we apply maximum principle.

Before we give a proof on the preservation of curvature conditions, we first recall the doubling-time estimate for the Hermitian curvature flow in \cite{StreetTian2011}.

\begin{prop}\label{double-time}
Suppose $(M,g_0)$ is compact Hermitian manifold with 
$$\sup_{M }|\Rm_{g_0}|+|T_{g_0}|^2+|\nabla_{g_0} T_{g_0}|\leq K_0$$
for some $K_0>0$. Then there is $c_1(n)>0$ such that the Hermitian curvature flow has a short-time solution on $M\times [0,c_1K_0^{-1}]$ with initial metric $g_0$ and satisfies 
$$\sup_{M \times[0,c_1K_0^{-1}]}|\Rm_{g}|+|T_{}|^2+|\nabla_{} T_{}|\leq 2K_0$$
\end{prop}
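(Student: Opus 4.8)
The plan is to reprove, by a Bernstein argument, the interior estimate of Streets--Tian \cite{StreetTian2011}: it has three ingredients, namely short--time existence, a \emph{closed} system of evolution inequalities for the curvature--order quantities $|\Rm|^{2}$, $|T|^{2}$, $|\nabla T|^{2}$, and an ODE comparison via the maximum principle.

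First I would settle short--time existence. In contrast to the Ricci flow, the Hermitian curvature flow \eqref{HRF} with $Q\equiv0$ is already strictly parabolic: since $S_{i\bar j}=-g^{k\bar l}\partial_{k}\partial_{\bar l}g_{i\bar j}+(\text{first order})$, the linearization of $g\mapsto-S(g)$ has positive principal symbol on Hermitian symmetric $2$--tensors and no DeTurck modification is needed. Standard parabolic theory then produces a unique smooth solution of \eqref{HRF} on a maximal interval $[0,T_{\max})$, $T_{\max}>0$, with the blow--up criterion that $T_{\max}<\infty$ forces $\limsup_{t\to T_{\max}}\sup_{M}|\Rm_{g(t)}|=\infty$. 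Both facts are in \cite{StreetTian2011}.

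Next come the Bernstein inequalities. From \eqref{Rm-evo} and Lemma~\ref{Ricci-id} one reads off, schematically, $\heat|\Rm|^{2}\le-c(n)|\nabla\Rm|^{2}+C(n)\bigl(|\Rm|^{3}+|T|^{2}|\Rm|^{2}+|T|\,|\nabla\Rm|\,|\Rm|\bigr)$. The delicate point concerns $T$: since $\partial_{t}g=-S$ gives $\partial_{t}T\sim\nabla S\sim\nabla\Rm$, a priori $T$ obeys only a first--order equation with no diffusion term. The structural observation of \cite{StreetTian2011} --- and this is exactly where the torsion Bianchi identities of Lemma~\ref{l-Chern-connection-1} are essential --- is that $\bar\nabla T$ is itself a curvature term, so $\nabla\bar\nabla T\sim\nabla\Rm$ and one may replace $\nabla S$, modulo an $\Rm\ast T$ term, by $\Delta T$; hence $T$ in fact satisfies a reaction--diffusion equation $\heat T=\Rm\ast T+\cdots$ of the expected curvature order, giving
\begin{equation*}
\heat|T|^{2}\le-c(n)\bigl(|\nabla T|^{2}+|\bar\nabla T|^{2}\bigr)+C(n)\bigl(|\Rm|\,|T|^{2}+|T|^{4}\bigr).
\end{equation*}
Applying the same idea one derivative higher yields a comparable inequality for $|\nabla T|^{2}$ whose reaction is cubic in $|\Rm|+|T|^{2}+|\nabla T|$ together with a cross term $|\nabla\Rm|\,|T|\,|\nabla T|$, with good term $-c(n)|\nabla\nabla T|^{2}$. (Note $|\bar\nabla T|\lesssim|\Rm|$ already by Lemma~\ref{l-Chern-connection-1}, so only $\nabla T$ in the $(1,0)$ direction is genuinely new.) Splitting every cross term carrying an extra derivative of $\Rm$ by Young's inequality with a small $\varepsilon=\varepsilon(n)$, the $\varepsilon|\nabla\Rm|^{2}$ pieces are absorbed into the negative gradient term of the $|\Rm|^{2}$--equation while the remainders are cubic in $|\Rm|+|T|^{2}+|\nabla T|$. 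Setting $\Phi:=|\Rm|^{2}+|T|^{4}+|\nabla T|^{2}$, so that $\Phi\le(|\Rm|+|T|^{2}+|\nabla T|)^{2}\le3\Phi$, the three inequalities combine into
\begin{equation*}
\heat\Phi\le C(n)\,\Phi^{3/2}.
\end{equation*}

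Finally I would invoke the maximum principle: $\phi(t):=\max_{M}\Phi(\cdot,t)$ is locally Lipschitz with $\phi'\le C(n)\phi^{3/2}$ in the barrier sense, hence $\phi(t)^{-1/2}\ge\phi(0)^{-1/2}-\tfrac{1}{2}C(n)t$. Since $\phi(0)\le K_{0}^{2}$, there is $c_{1}=c_{1}(n)>0$ with $\phi(t)\le\tfrac{4}{3}K_{0}^{2}$, hence $|\Rm|+|T|^{2}+|\nabla T|\le(3\phi(t))^{1/2}\le2K_{0}$, for all $t\le c_{1}K_{0}^{-1}$; in particular $\sup_{M}|\Rm|$ remains bounded there, so $T_{\max}\ge c_{1}K_{0}^{-1}$ by the blow--up criterion. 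The main obstacle is the middle step: tracking the many torsion terms in $\heat|T|^{2}$ and $\heat|\nabla T|^{2}$ and verifying, via Lemma~\ref{l-Chern-connection-1}, that the system genuinely closes on $|\Rm|^{2},|T|^{4},|\nabla T|^{2}$ --- i.e.\ that no derivative of $\Rm$ or $T$ beyond those already present escapes the estimate. This closure is exactly the content of the interior estimates in \cite{StreetTian2011}.
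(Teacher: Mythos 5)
Your proposal follows essentially the same route as the paper: short-time existence from \cite{StreetTian2011}, the differential inequality $\heat F\le C(n)F^{3/2}$ for $F=|\Rm|^2+|T|^4+|\nabla T|^2$ (which the paper simply cites as \cite[Lemma 7.2, Lemma 6.1]{StreetTian2011} rather than re-deriving), and an ODE comparison via the maximum principle. Your arithmetic in the final step is correct, so the argument is fine.
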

\begin{proof}
By \cite[Corollary 7.4]{StreetTian2011}, we have the short-time existence to \eqref{HRF}. It remains to establish the doubling-time estimates. By \cite[Lemma 7.2, Lemma 6.1]{StreetTian2011}, the function $F=|\Rm|^2+|T|^4+|\nabla T|^2$ satisfies 
$$\heat F\leq 2c_0F^\frac{3}{2}$$
for some $c_0(n)>0$. Apply maximum principle to $F(x,t)$, we may conclude that for all $t\in [0,C_nK_0^{-1}]\cap [0,(c_0K)^{-1}]$, 
$$F(x,t)\leq \left(\frac{1}{K_0^{-1}-c_0t}\right)^2.$$
The assertion follows by choosing sufficiently small $c_1(n)>0$.
\end{proof}

\begin{prop}\label{preserve-Ric}Suppose $(M,g(t))$ is a solution to the Hermitian curvature flow \eqref{HRF} such that $g_0$ has Griffiths non-positive Chern curvature. There is $t_0>0,\;K>0$ such that for all $t\in [0,t_0]$, $\Rm(g(t))$ satisfies the following conditions.
\begin{enumerate}
 \item $Ric\leq 0$;
 \item $|R_{u\bar v x\bar x}|^2\leq (1+Kt)R_{u\bar u} R_{v\bar v}$ for all $x,u,v\in T^{1,0}M$, $|x|=1$;
\end{enumerate}
\end{prop}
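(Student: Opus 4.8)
The plan is to run a tensor maximum principle argument adapted to the Hermitian setting, following Liu's strategy in the \K case but handling the torsion terms via an Uhlenbeck-type trick. First I would set up the preliminaries: by the doubling-time estimate (Proposition \ref{double-time}), on a short interval $[0,c_1K_0^{-1}]$ we have uniform control $|\Rm|+|T|^2+|\nabla T|\le 2K_0$, so all the ``error'' terms appearing below are uniformly bounded. The key structural observation, already flagged in the section introduction, is that the extra terms $T*\nabla\Rm$ and $S*\Rm$ in the evolution equations of Lemma \ref{Ricci-evo} (and its analogue for the full curvature) are not genuinely bad: the $T*\nabla\Rm$ piece is a first-order term that can be absorbed by the right choice of vector-field extension when applying the maximum principle, and the $S*\Rm$ piece behaves like a zeroth-order connection term. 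Concretely, I would introduce a perturbed evolution for $\tilde R_{i\bar j} = R_{i\bar j} - \delta(t) g_{i\bar j}$ for a suitable function $\delta(t)\ge 0$ with $\delta(0)=0$, and aim to show $\tilde R\le 0$ is preserved; since $\partial_t g_{i\bar j} = -S_{i\bar j}$, the evolution of $\tilde R_{i\bar j}$ picks up a controllable extra term $\delta S_{i\bar j} - \dot\delta g_{i\bar j}$.

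Next I would carry out the maximum principle argument for condition (1). Suppose $t_1$ is the first time and $p$ the first point where $\tilde R_{i\bar j}$ acquires a zero eigenvalue, with null eigenvector $\xi\in T^{1,0}_pM$. Extend $\xi$ in space and time so that $\nabla\xi = 0$ at $(p,t_1)$ and the extension is chosen to kill the first-order torsion terms $g^{r\bar s}(T^p_{ri}\nabla_{\bar s}R_{p\bar j} + T^{\bar q}_{\bar s\bar j}\nabla_r R_{i\bar q})\xi^i\bar\xi^j$ — this is exactly the Hermitian Uhlenbeck trick: one cannot find a single time-independent orthonormal frame trivializing the Chern connection, but one can arrange the extension of the single vector $\xi$ so that the offending terms vanish at the point. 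Then at $(p,t_1)$, evaluating Lemma \ref{Ricci-evo} on $\xi$: the Laplacian term is $\le 0$ by second-order conditions, the $\partial_t$ term is $\ge 0$, the $S*R$ terms vanish because $R_{p\bar j}\xi^j = $ (null) $\cdot\,\xi$ in the appropriate sense, and the crucial reaction term is $R_{i\bar jk}\,^p R_p\,^k\xi^i\bar\xi^j$. Here I would invoke the Griffiths non-positivity hypothesis (preserved — this is the content one bootstraps) together with $Ric\le 0$ to control the sign: writing things in an eigenbasis of $Ric$, the term $R_{i\bar j k\bar l}\xi^i\bar\xi^j$ contracted against $R^{k\bar l}\le 0$ should have a favorable sign given $\mathrm{BK}\le 0$, forcing a contradiction once $\delta,\dot\delta$ are chosen appropriately. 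Ustinovskiy's computation in \cite{Yury2016} is the template for massaging this reaction term.

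For condition (2), the pinching estimate $|R_{u\bar v x\bar x}|^2\le (1+Kt)R_{u\bar u}R_{v\bar v}$, I would proceed similarly but now applying the maximum principle to the scalar quantity $\Phi = |R_{u\bar v x\bar x}|^2 - (1+Kt)R_{u\bar u}R_{v\bar v}$ optimized over unit vectors $x$ and arbitrary $u,v$ — or more cleanly, treat it as a tensor inequality and use the full curvature evolution \eqref{Rm-evo} together with Lemma \ref{Ricci-evo}. At a first violation point one freezes the optimal $x,u,v$, extends them with vanishing covariant derivative to cancel the torsion first-order terms, and computes: the reaction terms from \eqref{Rm-evo} applied to $R_{u\bar v x\bar x}$ must be dominated, modulo the $Kt$ slack and the bounded torsion error (which is where the constant $K$ comes from — it eats the terms proportional to $|T|^2|\Rm|$), by the reaction terms in $R_{u\bar u}$ and $R_{v\bar v}$. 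The Griffiths non-positivity at time $t$ — which must be carried along as part of the induction, since $\mathrm{BK}\le 0$ controls the sign of the quadratic forms $R_{x\bar x\,\cdot\,\cdot}$ appearing in these contractions — is essential here; so really the statement one proves by a single simultaneous maximum-principle argument is the triple: $\mathrm{BK}\le \kappa(t)$ for some controlled $\kappa(t)\le 0$, together with (1) and (2).

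\textbf{Main obstacle.} The hard part will be the reaction-term bookkeeping in condition (2): verifying that the curvature-squared terms in \eqref{Rm-evo} for $|R_{u\bar v x\bar x}|^2$ are genuinely controlled by those for $R_{u\bar u}R_{v\bar v}$ up to the $(1+Kt)$ factor, using only Griffiths non-positivity and $Ric\le 0$, while simultaneously checking that the preservation of $\mathrm{BK}\le 0$ (or $\le\kappa(t)$) itself goes through — these are coupled, and the torsion error terms $T*\nabla\Rm$ and $T*T*\Rm$, although bounded, must be shown to fit inside the $Kt$ margin rather than spoiling the sign at $t=0$. Choosing the extensions of $u,v,x$ (a three-vector Uhlenbeck trick) so that all first-order terms cancel simultaneously at the critical point is the technical crux; in the \K case this is automatic from a parallel frame, but here it requires the pointwise argument of the style used by Ustinovskiy.
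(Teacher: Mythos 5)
Your overall framework (doubling-time estimate, first-violation-time maximum principle, vector-field extensions chosen to cancel the $T*\nabla\Rm$ terms, a perturbation to make the inequalities strict) matches the paper's. But there is a genuine structural gap in how you propose to close the reaction-term estimate. You plan to carry the preservation of Griffiths non-positivity $\mathrm{BK}\le\kappa(t)$ along as a third bootstrapped condition and to use it to control the term $R_{i\bar jk}\,^pR_p\,^k\,\xi^i\xi^{\bar j}$. For the flow with $Q\equiv 0$ this is not known to work: Ustinovskiy's preservation of the Griffiths condition is specific to the flow with the quadratic torsion term $Q_{i\bar j}=\frac{1}{2}g^{k\bar l}g^{p\bar q}T_{\bar l\bar qi}T_{kp\bar j}$, and the evolution of the full curvature tensor for $Q\equiv 0$ does not have the algebraic structure needed to preserve $\mathrm{BK}\le 0$. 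The paper avoids this entirely: Griffiths non-positivity is used \emph{only at $t=0$}, to verify via an eigenvector decomposition that the pinching inequality $|R^\e_{u\bar vx\bar x}|^2< R^\e_{u\bar u}R^\e_{v\bar v}$ holds initially (Lemma \ref{curv-zero}); thereafter it is the pinching condition (2) itself, degrading only linearly as $(1+Kt)$, that controls the reaction term in the Ricci evolution. Concretely, polarizing (2) gives $|R^\e_{i\bar jk\bar l}|^2\le C_nK_0|R^\e_{i\bar i}|$ and $|R^\e_{i\bar jk\bar l}|^2\le C_nR^\e_{i\bar i}R^\e_{j\bar j}$, so at the null point where $R^\e_{X\bar X}=-\e e^{-Kt}$ the term $R_{X\bar Xk}\,^pR_p\,^k$ is of size $O(K_0\e)$ and is absorbed by the $-\frac{1}{8}\e K$ margin once $K\gg K_0$. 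Your sketch treats (1) and (2) as needing the preserved Griffiths condition, whereas the actual proof closes using only (1) and (2) plus the uniform bounds from the doubling-time estimate; the remark following the proof states this explicitly.

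Two secondary points. First, perturbing only the Ricci tensor by $\delta(t)g$ is not enough, because condition (2) involves the full curvature tensor; the paper perturbs $\Rm$ itself by $\e B_{i\bar jk\bar l}=\e(g_{i\bar j}g_{k\bar l}+g_{i\bar l}g_{k\bar j})$, so that both inequalities become strict simultaneously and the first-failure-time dichotomy applies to the pair, with $\e\to 0$ at the end. Second, in the pinching case the extensions of $u,v$ must be prescribed in time as well as space ($\Box U^r=\frac{1}{2}S^r_pU^p$, and likewise for $V$) in order to cancel the $S*\Rm$ terms coming from the evolution equation; your ``three-vector Uhlenbeck trick'' gestures at the spatial cancellation but omits the time-derivative prescription, which is part of what makes the bookkeeping close.
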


To begin with, we consider 
$$R_{i\bar jk\bar l}^\epsilon=R_{i\bar jk\bar l}-\epsilon B_{i\bar jk\bar l}$$where $\epsilon$ is a small positive number and $B_{i\bar jk\bar l}=g_{i\bar j} g_{k\bar l}+g_{i\bar l}g_{k\bar j}$. Then Proposition \ref{preserve-Ric} is a direct consequence of the following Lemma by letting $\e\rightarrow 0$.

\begin{lma}\label{almost-preserve}
Under the assumption of Proposition \ref{preserve-Ric}, there is $\tau >0,K>0$ such that for any $\epsilon>0$, $t\in [0,\tau]$, the following holds.
\begin{enumerate}
\item[\bf(a)]$R_{i\bar j}^\e< -\e e^{-Kt}g_{i\bar j}$ where $R_{i\bar j}^\e=g^{k\bar l}R^\e_{i\bar jk\bar l}$;
\item[\bf (b)] $|R^\e_{u\bar v x\bar x }|^2< (1+Kt)R^\e_{u\bar u}R^\e_{v\bar v}$ for all $x,u,v\in T^{1,0}M$, $|x|=1$.
\end{enumerate}
\end{lma}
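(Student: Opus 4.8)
The plan is to run a tensor maximum principle argument in the spirit of Hamilton, but set up so that the problematic first-order torsion terms $T*\nabla\Rm$ and the zeroth-order terms $S*\Rm$ appearing in \eqref{Rm-evo} are absorbed by the freedom in choosing how to extend the relevant vectors $x,u,v$ (a Hermitian version of Uhlenbeck's trick, following Ustinovskiy \cite{Yury2016} and Liu \cite{Liu2014}). Concretely, I would argue by contradiction: suppose $\tau$ is the first time at which one of (a) or (b) fails, at a point $p\in M$. Since the Griffiths non-positivity $\mathrm{BK}\le\kappa\le 0$ of $g_0$ gives, by definition, $R^\epsilon_{i\bar jk\bar l}$ strictly negative in the Griffiths sense at $t=0$ (so that $R^\epsilon_{i\bar j}<-\epsilon g_{i\bar j}$ strictly and $R^\epsilon_{u\bar u}<0$ strictly at $t=0$), both inequalities hold with strict sign at $t=0$; hence $\tau>0$ by continuity and compactness, and at least one of (a), (b) becomes an equality at $(p,\tau)$ while both remain valid (non-strictly) on $[0,\tau]$.

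For case (a): at $(p,\tau)$ pick a unit vector $X$ with $R^\epsilon_{i\bar j}X^i\bar X^j=-\epsilon e^{-K\tau}|X|^2$, extend $X$ to a vector field near $p$ that is $\nabla$-parallel at $p$ and chosen (using the torsion) so that the extra terms $g^{r\bar s}(T^p_{ri}\nabla_{\bar s}R^\epsilon_{p\bar j}X^i\bar X^j+\dots)$ and $-\tfrac12(S^p_iR^\epsilon_{p\bar j}+S^{\bar q}_{\bar j}R^\epsilon_{i\bar q})X^i\bar X^j$ cancel when we differentiate $\phi:=R^\epsilon_{i\bar j}X^i\bar X^j+\epsilon e^{-Kt}|X|^2_{g(t)}$ in time; this is exactly the mechanism described in the paragraph opening \S4. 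The surviving reaction term is $R^\epsilon_{i\bar jk}{}^pR_p{}^k X^i\bar X^j$ plus the $B$-correction; one checks using Griffiths non-positivity of $\Rm$ (equivalently of $\Rm^\epsilon$, up to the controllable $\epsilon B$ term) and $Ric^\epsilon\le 0$ that this reaction term, together with the $-\epsilon K e^{-Kt}|X|^2$ coming from the shift, is strictly negative for $K$ chosen large depending only on the uniform bounds from Proposition \ref{double-time}. Since at a first-touching minimum of $\phi-$(boundary value) we have $\partial_t\phi\ge 0$ and $\Delta\phi\le 0$ at $(p,\tau)$, this contradicts the evolution inequality. The analysis of case (b) is parallel: at $(p,\tau)$ diagonalize so that the quantity $\Phi:=(1+Kt)R^\epsilon_{u\bar u}R^\epsilon_{v\bar v}-|R^\epsilon_{u\bar v x\bar x}|^2$ vanishes, extend $x,u,v$ appropriately, use \eqref{Rm-evo} together with Lemma \ref{l-Chern-connection-1} to replace derivatives of $\Rm$ by derivatives of $\Rm$ with indices permuted plus torsion, and check that the reaction terms — which are the same algebraic expressions Ustinovskiy controls in the non-negatively curved case, now with reversed sign and the $(1+Kt)$ weight — have the right sign, using (a), $Ric^\epsilon<0$ strictly away from the touching directions, and Cauchy--Schwarz. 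The $Kt$-term is precisely there to create the strict slack that beats the reaction.

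I would organize the write-up as: (i) the contradiction setup and verification that both conditions are strict at $t=0$; (ii) the Uhlenbeck-type vector-field extension lemma, stated once and used in both cases, showing that with the parallel-plus-torsion-corrected extension the ODE part of the evolution loses the $T*\nabla\Rm$ and $S*\Rm$ terms; (iii) case (a); (iv) case (b). Steps (i) and (ii) are essentially bookkeeping built on Lemmas \ref{Ricci-id} and \ref{l-Chern-connection-1} and on Proposition \ref{double-time} for the uniform geometry bounds. The main obstacle I expect is step (iv): verifying that the reaction terms for $\Phi$ — which in the K\"ahler case are Liu's computation and in the Hermitian non-negative case are Ustinovskiy's — still have a definite sign here, because in our situation we do not have a sign on the full curvature operator, only Griffiths non-positivity, so the argument must stay strictly within contractions of $R(\cdot,\bar\cdot,\cdot,\bar\cdot)$-type expressions and cannot invoke, say, non-negativity of $\sum R_{i\bar i\alpha\bar\alpha}|\lambda^\alpha|^2$ for arbitrary Hermitian $\lambda$. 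Matching the torsion corrections in (ii) so that they simultaneously kill the unwanted terms in \eqref{Rm-evo} without introducing new uncontrolled ones is the delicate point, and it is where the choice $Q\equiv 0$ (so that $S$, not some modified tensor, drives the flow and appears in the evolution) is used in an essential way.
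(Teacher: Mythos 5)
Your high-level plan (first-touching-time contradiction, strictness at $t=0$ via the $\epsilon B$ shift, torsion-corrected vector extensions to absorb the $T*\nabla\Rm$ and $S*\Rm$ terms, the $Kt$-weights as a source of slack) does match the paper's strategy. But the proposal has a genuine gap at the one place where a sign or size must actually be produced, namely the reaction term in case (a). You propose to control $R_{X\bar Xk}{}^pR_p{}^k$ ``using Griffiths non-positivity of $\Rm$ and $Ric^\epsilon\le 0$.'' Two problems. First, Griffiths non-positivity is only assumed at $t=0$; it is not among the conditions being propagated, and nothing in the argument makes it available at the touching time $t_0>0$. Second, even if it were available, it would not help: with both $\bigl(R_{X\bar Xk\bar p}\bigr)$ and the first Ricci $\bigl(R_{p\bar q}\bigr)$ non-positive Hermitian, the contraction $R_{X\bar Xk}{}^pR_p{}^k$ is \emph{non-negative}, i.e.\ it has the wrong sign and works against you. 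The paper's resolution is quantitative, not sign-based: condition (b), polarized, yields $|R^\e_{X\bar Xk\bar l}|^2\le C_n (R^\e_{X\bar X})^2$, and at the touching point $R^\e_{X\bar X}=-\e e^{-Kt}$, so the reaction term is $O(C_nK_0\,\e)$ and is beaten by the $-\tfrac18\e K$ coming from the shift once $K\gg K_0$. This coupling --- (b) controls the bad term in the evolution of (a), while (a) and the derivative of the $(1+Kt)$ weight control the bad terms in the evolution of (b) --- is the actual content of Liu's scheme that the lemma adapts, and it is absent from your outline.

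For case (b) you explicitly defer the verification (``the main obstacle I expect is step (iv)''), so that half of the lemma is not proved at all. The missing ingredients there are: the precise choice of extension including $\Box U^r=\tfrac12 S^r_pU^p$ (needed to cancel the $-\tfrac12 S*\Rm$ terms on the $U,V$ slots of \eqref{Rm-evo}); the use of $\nabla F=0$ at the maximum of $F=g_{X\bar X}^{-2}|R^\e_{U\bar VX\bar X}|^2-(1+Kt)R^\e_{U\bar U}R^\e_{V\bar V}$ to dominate the cross term $2(1+Kt)\Re\bigl(g^{r\bar s}\nabla_rR^\e_{U\bar U}\nabla_{\bar s}R^\e_{V\bar V}\bigr)$ by $|\nabla R^\e_{U\bar VX\bar X}|^2+|\bar\nabla R^\e_{U\bar VX\bar X}|^2$; and the observation that every remaining quadratic-in-curvature term is bounded by $C_nK_0R^\e_{U\bar U}R^\e_{V\bar V}$ via \eqref{b2} and $F(p,t_0)=0$, so that the term $\tfrac14 KR^\e_{U\bar U}R^\e_{V\bar V}$ produced by $\partial_t(1+Kt)$ wins for $K\gg K_0$. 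Without these, the proposal is a correct roadmap of the difficulties rather than a proof.
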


Before we give a proof of Lemma \ref{almost-preserve}, we first show that $g(0)$ satisfies the assumptions in the Lemma. 
\begin{lma}\label{curv-zero}
Under the assumption of Proposition \ref{preserve-Ric}, $\Rm^\epsilon(0)$ satisfies 
\begin{enumerate}
\item$R_{i\bar j}^\e(g_0)< -\e (g_0)_{i\bar j}$ where $R_{i\bar j}^\e=g^{k\bar l}R^\e_{i\bar jk\bar l}$;
\item$|R(g_0)^\epsilon_{u\bar vx\bar x}|^2< R(g_0)^\epsilon_{u\bar u} R(g_0)^\epsilon_{v\bar v}$ for all $x,u,v\in T^{1,0}M$, $|x|=1$.
\end{enumerate}
\end{lma}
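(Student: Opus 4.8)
The plan is to verify the two pointwise inequalities of Lemma~\ref{curv-zero} directly from the hypothesis that $g_0$ has Griffiths non-positive Chern curvature, i.e.\ $\mathrm{BK}\le\kappa$ for some non-positive continuous function $\kappa$, together with the fact that $M$ is compact (so one may choose a uniform upper bound $\kappa\le 0$, and everything in sight is bounded). I work at a fixed point $p\in M$ in a $g_0$-unitary frame, so that $(g_0)_{i\bar j}=\delta_{ij}$ and $B_{i\bar j k\bar l}=\delta_{ij}\delta_{kl}+\delta_{il}\delta_{kj}$.

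\emph{Step 1 (the Ricci inequality, part (1)).} By definition $R^\e_{i\bar j}=g^{k\bar l}R^\e_{i\bar jk\bar l}=R_{i\bar j}-\e\, g^{k\bar l}B_{i\bar jk\bar l}=R_{i\bar j}-(n+1)\e\, g_{i\bar j}$. So it suffices to show $R_{i\bar j}\le 0$ at every point, since then $R^\e_{i\bar j}\le -(n+1)\e\,g_{i\bar j}<-\e\,g_{i\bar j}$. To get $\mathrm{Ric}\le 0$ from $\mathrm{BK}\le\kappa\le 0$: fix a unit vector $X\in T^{1,0}_pM$, complete it to a unitary frame $\{e_1=X,e_2,\dots,e_n\}$, and sum the Griffiths inequality $R(X,\bar X,e_k,\bar e_k)\le \kappa\, B(X,\bar X,e_k,\bar e_k)=\kappa(1+\delta_{1k})$ over $k=1,\dots,n$. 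The left side is $\sum_k R_{1\bar 1 k\bar k}=R_{1\bar 1}=\mathrm{Ric}(X,\bar X)$ (note this uses $S$ versus first Ricci correctly: here we trace the last two slots, which is exactly $R_{i\bar j}$), and the right side is $\kappa(n+1)\le 0$. Hence $\mathrm{Ric}(X,\bar X)\le (n+1)\kappa\le 0$, proving part (1) with strict inequality for $\e>0$.

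\emph{Step 2 (the bisectional inequality, part (2)).} Here the key structural fact is that the tensor $R^\e(\cdot,\bar\cdot,\cdot,\bar\cdot)$ restricted to ``$\mathrm{BK}$-type'' arguments is \emph{negative}: for any $X,Y$, $R^\e(X,\bar X,Y,\bar Y)=R(X,\bar X,Y,\bar Y)-\e B(X,\bar X,Y,\bar Y)\le \kappa B(X,\bar X,Y,\bar Y)-\e B(X,\bar X,Y,\bar Y)$, and since $B(X,\bar X,Y,\bar Y)=|X|^2|Y|^2+|\langle X,\bar Y\rangle|^2\ge 0$ (with equality only when one of $X,Y$ is zero), we get $R^\e(X,\bar X,Y,\bar Y)\le -\e B(X,\bar X,Y,\bar Y)<0$ whenever $X,Y\ne 0$. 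In particular $R^\e_{u\bar u}<0$ and $R^\e_{v\bar v}<0$ (this is consistent with Step 1). Now the inequality $|R^\e_{u\bar v x\bar x}|^2< R^\e_{u\bar u}R^\e_{v\bar v}$ should follow from a Cauchy--Schwarz argument applied to the sesquilinear form $(a,b)\mapsto R^\e_{a\bar b x\bar x}$ on the span of $u,v$: the form is Hermitian (using $\overline{R_{i\bar jk\bar l}}=R_{j\bar il\bar k}$, so $\overline{R^\e_{u\bar v x\bar x}}=R^\e_{v\bar u x\bar x}$) and, by the displayed computation with $Y=x$, it is \emph{negative definite} on any vector, hence negative \emph{semi}definite as a form; Cauchy--Schwarz for a negative semidefinite Hermitian form gives $|R^\e_{u\bar v x\bar x}|^2\le R^\e_{u\bar u x\bar x}\cdot R^\e_{v\bar v x\bar x}$ (writing $R^\e_{u\bar u x\bar x}:=R^\e(u,\bar u,x,\bar x)$). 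It then remains to bound $R^\e_{u\bar u x\bar x}\le R^\e_{u\bar u}$ and $R^\e_{v\bar v x\bar x}\le R^\e_{v\bar v}$ with $|x|=1$: indeed $R^\e_{u\bar u}=\sum_k R^\e_{u\bar u k\bar k}$ in a unitary frame containing $x=e_1$, and every term $R^\e_{u\bar u k\bar k}=R^\e(u,\bar u,e_k,\bar e_k)<0$, so dropping the $k\ne 1$ terms only increases the (negative) sum: $R^\e_{u\bar u}\le R^\e_{u\bar u x\bar x}<0$, i.e.\ $0<-R^\e_{u\bar u x\bar x}\le -R^\e_{u\bar u}$, and similarly for $v$. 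Multiplying the two and combining with Cauchy--Schwarz yields $|R^\e_{u\bar v x\bar x}|^2\le R^\e_{u\bar u x\bar x}R^\e_{v\bar v x\bar x}\le R^\e_{u\bar u}R^\e_{v\bar v}$; the strictness of part (2) comes from the fact that $R^\e_{u\bar u x\bar x}< R^\e_{u\bar u}$ strictly unless $n=1$ or the off-diagonal terms vanish, but in any case one of the inequalities in the chain is strict because $\e>0$ makes $R^\e_{u\bar u}<R^\e_{u\bar u x\bar x}$ fail to be equality precisely when $u$ is not proportional to $x$—and when $u\parallel x$ one checks the inequality $|R^\e_{x\bar v x\bar x}|^2<R^\e_{x\bar x}R^\e_{v\bar v}$ separately using the negative definiteness; alternatively, one notes $1+Kt$ is not needed here at $t=0$ so it is enough to have the non-strict Cauchy--Schwarz together with the strict bound $R^\e_{a\bar a x\bar x}<0$, which already forces the product inequality to be attained only in degenerate configurations excluded by $\e>0$.

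\emph{Main obstacle.} The genuinely delicate point is the \emph{strictness} in part~(2): plain Cauchy--Schwarz gives only ``$\le$'', and one must exploit that the perturbation $-\e B$ makes the form $R^\e(\cdot,\bar\cdot,x,\bar x)$ \emph{strictly} negative definite rather than merely semidefinite, so that equality in Cauchy--Schwarz (which would require $u,v$ to be linearly dependent in a way compatible with the form) can be analyzed and ruled out, or absorbed. I expect the cleanest route is: restrict to the $2$-dimensional (or $1$-dimensional, if $u\parallel v$) span of $\{u,v\}$, diagonalize the strictly negative definite Hermitian form $Q(a,b):=R^\e(a,\bar b,x,\bar x)$ there, and write the desired inequality as $|Q(u,v)|^2<Q(u,u)\cdot(-P(v,v))$ type statement... actually the correct target after Step~2's reductions is simply $|Q(u,v)|^2\le Q(u,u)Q(v,v)$ with $Q$ negative definite, which is an \emph{equality-or-strict} Cauchy--Schwarz: equality holds iff $u,v$ are linearly dependent, and in that case $|R^\e_{u\bar v x\bar x}|^2=R^\e_{u\bar u x\bar x}R^\e_{v\bar v x\bar x}$ while still $R^\e_{u\bar u x\bar x}R^\e_{v\bar v x\bar x}\le R^\e_{u\bar u}R^\e_{v\bar v}$ with \emph{strict} inequality as soon as $u$ (equivalently $v$) is not a multiple of $x$, because then some diagonal term $R^\e_{u\bar u k\bar k}$ with $k\ne 1$ is strictly negative; and if $u,v$ are both multiples of $x$ then both sides are $|R^\e_{x\bar x x\bar x}|^2$ versus $(R^\e_{x\bar x})^2$, and $R^\e_{x\bar x}\le R^\e_{x\bar x x\bar x}<0$ with equality iff $n=1$, so for $n\ge 2$ we again win, while for $n=1$ the inequality $|R^\e_{x\bar x x\bar x}|^2< (R^\e_{x\bar x})^2$ is just $R^\e_{x\bar x x\bar x}=R^\e_{x\bar x}$ giving equality—so one must in fact keep $x$ genuinely present and handle $n=1$ by a direct check (in dimension one the statement is $|R^\e|^2<(R^\e)^2$, impossible, which signals that the correct reading of (2) in Lemma~\ref{curv-zero} uses the strictness coming from $\e>0$ only through $R^\e_{u\bar u}<R_{u\bar u}\le 0$, not through a strict Cauchy--Schwarz; so the honest fix is to note $n\ge 2$ is implicitly assumed or that (2) is vacuous/trivial for $n=1$). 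This case-analysis is where I expect the write-up to need care; everything else is the frame-chasing of Steps~1 and~2.
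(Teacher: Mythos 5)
Your argument is correct and essentially the same as the paper's: part (1) by tracing the Griffiths inequality against $g^{k\bar l}B_{i\bar jk\bar l}=n+1$, and part (2) by diagonalizing the negative-definite Hermitian form $R^\e(\cdot,\bar\cdot,x,\bar x)$, applying Cauchy--Schwarz, and then using that the full trace $R^\e_{a\bar a}$ lies strictly below $R^\e_{a\bar a x\bar x}$ because each additional diagonal term is $\le -\e B<0$. The edge cases you worry about ($n=1$, or $u=0$, $v=0$) are glossed over in the paper's proof as well, so once trimmed your write-up matches the published argument.
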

\begin{proof}
The conclusion on $Ric^\epsilon$ follows immediately from $Ric(g_0)\leq 0$ and $g^{k\bar l}B_{i\bar jk\bar l}=n+1$.

To check $(2)$, for a fixed $x\in T^{1,0}M$, $R^\epsilon_{ p\bar qx\bar x}$ is Hermitian form and hence we may choose eigenvectors $\{e_i\}_{i=1}^n$ such that 
$$R^\epsilon _{i\bar jx\bar x }=\lambda_i \delta_{ij}$$
where $\lambda_i<0$ as $\text{BK}(g_0)\leq 0$. Therefore, for $u=\sum_{i=1}^n u^ie_i$ and  $v=\sum_{i=1}^n v^i e_i$,
\begin{equation}
\begin{split}
|R^\epsilon_{u\bar vx\bar x }|^2
&=|\sum_{i=1}^n\lambda_iu^i v^{\bar i}|^2\\
&\leq \left[ \sum_{i=1}^n \lambda_i|u^i|^2\right]\left[ \sum_{i=1}^n \lambda_i|v^i|^2\right]\\
&=R^\epsilon_{ u\bar ux\bar x}\, R^\epsilon_{v\bar vx\bar x}\\
&< R_{u\bar u}^\epsilon \,R^\epsilon_{v\bar v}
\end{split}
\end{equation}
since $\mathrm{BK}(g_0)\leq 0$.
\end{proof}

\begin{proof}[Proof of Lemma \ref{almost-preserve}]
Denote $K_0=\sup_{M\times\{0\}}|\Rm|+|T|^2+|\nabla T|$ and choose $K>> K_0$ and $\tau =K^{-1}$. By proposition \ref{double-time}, the Hermitian curvature flow $g(t)$ exists on $M\times [0,\tau]$ with 
\begin{align}\label{doubletime}
\sup_{M \times [0,\tau]}|\Rm|+|T|^2+|\nabla T|\leq 2K_0.
\end{align}

Suppose condition {\bf (a)} and {\bf (b)} are true on $[0,\tau]$, then we are done. Let $t_0\in (0,\tau]$ be the first time such that one of them fails. \noindent\\

{\bf Case 1:} Condition {\bf (a)} is  true on $[0,t_0)$ and fails at $t=t_0$. Then there is $p\in M$, $X_0\in T^{1,0}_p M$ with $|X_0|=1$ such that 
$$Ric^\e(X_0,\bar X_0)=-\e e^{-Kt}.$$
Moreover, for all $z\in M$, $t\in [0,t_0]$, $y,u,v\in T_z^{1,0}M$ with $|Y|=1$,
\begin{equation}\label{a1}
\begin{split}
Ric^\e(Y,\bar Y)&\leq -\e  e^{-Kt};\\
|R^\e_{u\bar v y\bar y }|^2&< (1+Kt)R^\e_{u\bar u}R^\e_{v\bar v}.
\end{split}
\end{equation}

As in \cite[Page 1599]{Liu2014}, we may use polarization and \eqref{doubletime} to infer that for sufficiently small $\e>0$, any $e_k,e_l\in T^{1,0}$ with unit $1$ and $e_i,e_j\in T^{1,0}$,
\begin{align}\label{a2}
|R^\e_{i\bar jk\bar l}|^2 \leq C_n R^\e_{i\bar i}R^\e_{j\bar j}, \;\; |R^\e_{i\bar jk\bar l}|^2 \leq C_nK_0 |R^\e_{i\bar i}|.
\end{align}
\noindent

Consider the following tensor $$A_{i\bar j}=R_{i\bar j}+\e\left[e^{-Kt}-(n+1) \right] g_{i\bar j}=Ric^\e_{i\bar j}+\e e^{-Kt}g_{i\bar j}$$
which satisfies $A(X_0,\bar X_0)=0$ and $A(Y,\bar Y)\leq 0$ for all $Y\in T^{1,0}_xM$, $x\in M$. We may assume $|X_0|_{g(t_0)}=1$ by rescaling.

Extend $X_0$ locally to a vector field around $(p,t_0)$ { such that at $(p,t_0)$,
\begin{equation}
\label{ext-1}
\begin{split}
\nabla_{\bar q} X^p&=0;\\
\nabla_{p}X^q &=T^q_{pl}\;X^l.
\end{split}
\end{equation}
}

Then $A(X,\bar X)$ locally defined a function and satisfies 
\begin{align}\label{max-1}
\Box A(X,\bar X) \geq 0.
\end{align}
where we denote $\heat$ by $\Box$ for notational convenience.

Now we compute the evolution equation for $A(X,\bar X)$. At $(p,t_0)$, 
\begin{equation}\label{equ-A-1}
\begin{split}
\frac{\partial}{\partial t} A(X,\bar X)
&=\left(\partial_t A_{i\bar j}\right) X^iX^{\bar j} +A_{i\bar j} \left( \partial_t X^i X^{\bar j} +X^i \partial_t X^{\bar j}\right)\\
&=\left(\partial_t R_{i\bar j}-\e[e^{-Kt}-(n+1)]S_{i\bar j}-\e Ke^{-Kt}g_{i\bar j}\right)X^i X^{\bar j}\\
&\quad +A_{i\bar j} \left( \partial_t X^i \cdot X^{\bar j} +X^i\cdot \partial_t X^{\bar j}\right)\\
&\leq (\partial_t R_{i\bar j}) X^i X^{\bar j} -\frac{1}{2} \e K e^{-Kt}.
\end{split}
\end{equation}
Here we have used  \eqref{doubletime} and  the fact that for any $Y\in T^{1,0}_pM$,
\begin{align}\label{firstorder}
A_{X_0\bar Y}=0
\end{align}

Now we compute the $\Delta A(X,\bar X)$. We may in addition assume that at $(p,t_0)$, $g_{i\bar j}=\delta_{i\bar j}$. For notational convenience for a tensor $\a$, we will use $\a_{;X}$ to denote covariant derivative $\nabla_X \a$. Then 
\begin{equation}\label{equ-A-2}
\begin{split}
\Delta A(X,\bar X)
&=\frac{1}{2} g^{r\bar s} (\nabla_r \nabla_{\bar s}+\nabla_{\bar s}\nabla_r) \Big( A_{i\bar j }X^i X^{\bar j}\Big)\\
&=\Delta A_{i\bar j}\cdot X^i X^{\bar j}+A_{i\bar j} X^i \Delta X^{\bar j}+A_{i\bar j} X^{\bar j} \Delta X^{ i}\\
&\quad +A_{i\bar j;r} X^i_{;\bar r}X^{\bar j} +A_{i\bar j;\bar r} X^i_{; r}X^{\bar j} \\
&\quad +A_{i\bar j;r} X^iX^{\bar j}_{;\bar r} +A_{i\bar j;\bar r} X^iX^{\bar j}_{; r} \\
&\quad +A_{i\bar j} X^i_{;r} X^{\bar j}_{;\bar r}+A_{i\bar j} X^i_{;\bar r} X^{\bar j}_{; r}\\
&=\Delta R_{i\bar j}\cdot X^i X^{\bar j}+R_{i\bar j;r} X^i_{;\bar r}X^{\bar j} +R_{i\bar j;\bar r} X^i_{; r}X^{\bar j} \\
&\quad +R_{i\bar j;r} X^iX^{\bar j}_{;\bar r} +R_{i\bar j;\bar r} X^iX^{\bar j}_{; r} \\
&\quad +A_{i\bar j} X^i_{;r} X^{\bar j}_{;\bar r}+A_{i\bar j} X^i_{;\bar r} X^{\bar j}_{; r}\\
\end{split}
\end{equation}
Here we have also used $\nabla g=0$ and \eqref{firstorder} on the terms involving $A(\Delta X,\bar X)$ or its conjugate.

By combining \eqref{equ-A-1} and \eqref{equ-A-2}, we have
\begin{equation}
\begin{split}
\Box A(X,\bar X)
&\leq X^iX^{\bar j}\Box R_{i\bar j} -\frac{1}{2}\e K e^{-Kt}\\
&\quad -g^{r\bar s}\Big[R_{i\bar j;r} X^i_{;\bar s}X^{\bar j} +R_{i\bar j;\bar s} X^i_{; r}X^{\bar j} \\
&\quad +R_{i\bar j;r} X^iX^{\bar j}_{;\bar s} +R_{i\bar j;\bar s} X^iX^{\bar j}_{; r} \Big]\\
&\quad -g^{r\bar s}\left(R_{i\bar j} X^i_{;r} X^{\bar j}_{;\bar s}+R_{i\bar j} X^i_{;\bar s} X^{\bar j}_{; r}\right)\\
&\quad -\e\Big[ e^{-Kt}-(n+1)\Big] g^{r\bar s} \left( X^i_{;r}X^{\bar j}_{;\bar s}+X^i_{;\bar s}X^{\bar j}_{;r}\right).
\end{split}
\end{equation}

By \eqref{ext-1} and \eqref{doubletime}, then it reduces to 
\begin{equation}
\begin{split}
\Box A(X,\bar X)&\leq R_{X\bar Xk}\;^p R_p\;^k-\frac{1}{2}(S^p_X R_{p\bar X} +S^{\bar q}_{\bar X}R_{X\bar q})
-\frac{1}{4}\e K.
\end{split}
\end{equation}

As $K>>K_0$, by \eqref{a1},\eqref{firstorder},\eqref{a2} and  \eqref{doubletime}, we have 
\begin{equation}
\begin{split}
\Box A(X,\bar X)&\leq R_{X\bar Xk}\;^p R_p\;^k
-\frac{1}{8}\e K\\
&\leq 
-\frac{1}{10}\e K
\end{split}
\end{equation}
which contradicts with \eqref{max-1}. 

\noindent\\

{\bf Case 2:} Suppose condition {\bf (b)} is not true at $t=t_0$. Then there is $p\in M$, $x_0,u_0,v_0\in T^{1,0}_pM$ with $|x_0|_{t_0}=1$ such that 
$$|R^\e_{u_0\bar v_0 x_0\bar x_0}|^2=(1+Kt) R^\e_{u_0 \bar u_0}R^\e_{v_0\bar v_0}.$$
By rescaling, we may assume $|u_0|_{t_0}=|v_0|_{t_0}=1$. Moreover for all $(z,t)\in M\times [0,t_0]$, $x,u,v\in T_z^{1,0}M$ with $|x|_t=1$,
\begin{equation}\label{b1}
\begin{split}
R^\e_{x\bar x}&\leq -\e e^{-Kt};\\
|R^\e_{u\bar v x\bar x}|^2&\leq (1+Kt) R^\e_{u \bar u}R^\e_{v\bar v}.
\end{split}
\end{equation}

Argue as in \eqref{a2}, we know that for any $e_k,e_l\in T^{1,0}$ with unit $1$, 
\begin{align}\label{b2}
|R^\e_{i\bar jk\bar l}|^2 \leq C_n R^\e_{i\bar i}R^\e_{j\bar j}, \;\; |R^\e_{i\bar jk\bar l}|^2 \leq C_nK_0 |R^\e_{i\bar i}|.
\end{align}

As in case 1, we will extend $x_0$, $u_0$ and $v_0$ to local vector fields so that we may apply maximum principle. {We extend $x_0,u_0,v_0$ to $X$, $U$ and $V$ such that at $(p,t_0)$,
\begin{equation}\label{ext-2}
\begin{split}
\nabla_{\bar s} U^r=0 &, \quad \nabla_{p} U^r=T^r_{p q}U^q,\quad \Box U^r=\frac{1}{2}S^r_p U^p;\\
 \nabla_{\bar s} V^r=0& ,\quad \nabla_{p} V^r=T^r_{p q}V^q;\quad  \Box V^r=\frac{1}{2}S^r_p V^p;\\
 \nabla_{\bar s} X^r=0& ,\quad \nabla_{p} X^r=0;\quad\quad\quad   \Box X^r=0.\\
\end{split}
\end{equation}
}
Hence the function $$F(x,t)=g_{X\bar X}^{-2}|R^\e_{U\bar V X\bar X}|^2-(1+Kt)R^\e_{U\bar U}R^\e_{V\bar V}$$ is defined locally around $(p,t_0)$ and attains its local maximum at $(p,t_0)$ and therefore satisfies
\begin{align}\label{max-2}
\Box F\Big|_{(p,t_0)}\geq 0.
\end{align}

We now differentiate each of them carefully. Using \eqref{ext-2} and Lemma \ref{Ricci-evo}, a similar calculation as in {\bf case 1} yields 
\begin{equation}\label{evo-Ric-1}
\begin{split}
\Box R^\e_{U\bar U}
&=\Box R^\e_{i\bar j}\cdot U^i U^{\bar j}+R^\e_{i\bar j} U^i \Box U^{\bar j} +R^\e_{i\bar j}\Box U^i\cdot  U^{\bar j}\\
&\quad -g^{r\bar s}R^\e_{i\bar j} U^i_{;r} U^{\bar j}_{;\bar s}
-g^{r\bar s}R_{i\bar j;r}  U^iU^{\bar j}_{;\bar s}
-g^{r\bar s}R_{i\bar j;\bar s} U^i_{;r} U^{\bar j}\\
&=\Box R_{i\bar j}\cdot U^i U^{\bar j} +\e(n+1)S_{U\bar U}\\
&\quad +R^\e_{U\bar j} \Box U^{\bar j} +R^\e_{i\bar U}\Box U^i+\e(n+1)g^{r\bar s}g_{i\bar j} T^i_{rp} T^{\bar j}_{\bar s\bar q}U^p U^{\bar q}\\
&\quad -g^{r\bar s}R_{i\bar j} T^i_{rp} T^{\bar j}_{\bar s\bar q}U^pU^{\bar q}
-g^{r\bar s}R_{i\bar j;r}  T^{\bar j}_{\bar s\bar q}U^iU^{\bar q}
-g^{r\bar s}R_{i\bar j;\bar s} T^i_{rp}U^p U^{\bar j}\\
&= R_{U\bar U k}\;^p R^k_p-\e(n+1)S_{U\bar U} +\e(n+1)g^{r\bar s}g_{i\bar j} T^i_{rU} T^{\bar j}_{\bar s\bar U}.
\end{split}
\end{equation}
Similarly, 
\begin{equation}\label{evo-Ric-2}
\begin{split}
\Box R^\e_{V\bar V}
&= R_{V\bar V k}\;^p R^k_p-\e(n+1)S_{V\bar V}+\e(n+1)g^{r\bar s}g_{i\bar j} T^i_{rV} T^{\bar j}_{\bar s\bar V}.
\end{split}
\end{equation}

By combining \eqref{evo-Ric-1}, \eqref{evo-Ric-2} with \eqref{b1}, \eqref{b2}, \eqref{doubletime} and using the fact that $K>>K_0$ and $\tau=K^{-1}$, we arrive at the following inequality.
\begin{equation}
\begin{split}
&\quad \Box \left[(1+Kt)R^\e_{U\bar U}R^\e_{V\bar V}\right]\\
&\geq (1+Kt)R^\e_{U\bar U} \left( R^\e_{V\bar V k}\;^p R^k_p\right) +(1+Kt)R^\e_{V\bar V}\left( R^\e_{U\bar U k}\;^p R^k_p\right)\\
&\quad -2(1+Kt){\bf Re}\left(g^{r\bar s}\nabla_r R^\e_{U\bar U} \cdot \nabla_{\bar s}R^\e_{V\bar V} \right)+\frac{1}{2}KR^\e_{U\bar U}R^\e_{V\bar V}\\
&\geq -2(1+Kt){\bf Re}\left(g^{r\bar s}\nabla_r R^\e_{U\bar U} \cdot \nabla_{\bar s}R^\e_{V\bar V} \right)+\frac{1}{4} KR^\e_{U\bar U }R^\e_{V\bar V}
\end{split}
\end{equation}

\noindent\\
Now we derive the evolution equation of $|R^\e_{U\bar VX\bar X}|^2$. Similar to the computation of $\Box R^\e_{U\bar U}$, using \eqref{ext-2} and Lemma \ref{Rm-evo}, we have 
\begin{equation}\label{evo-Rm-1}
\begin{split}
\Box R^\e_{U\bar VX\bar X}
&=\Box R_{i\bar jk\bar l}\cdot U^iV^{\bar j}X^kX^{\bar l}-\e \Box B_{i\bar jk\bar l}\cdot U^iV^{\bar j}X^kX^{\bar l}\\
&\quad + R^\e_{i\bar jk\bar l}(\Box U^i) V^{\bar j}X^kX^{\bar l}+ R^\e_{i\bar jk\bar l} U^i(\Box V^{\bar j})X^kX^{\bar l}\\
&\quad +R^\e_{i\bar jk\bar l} U^iV^{\bar j}\left(\Box X^k \;X^{\bar l}+X^k \Box X^{\bar l}\right)\\
&\quad -g^{r\bar s} R_{i\bar jk\bar l;r}\left(U^iV^{\bar j}_{;\bar s}X^kX^{\bar l}+U^i V^{\bar j}X^k X^{\bar l}_{;\bar s} \right)\\
&\quad -g^{r\bar s} R_{i\bar jk\bar l;\bar s}\left(U^i_{;r}V^{\bar j}X^kX^{\bar l}+U^i V^{\bar j}X_{;r}^k X^{\bar l} \right)\\
&\quad  -R^\e_{i\bar jk\bar l} \Big(U^i_{;r}V^{\bar j}_{;\bar s}X^kX^{\bar l} +U^iV^{\bar j}_{;\bar s}X^k_{;r}X^{\bar l} \\
&\quad +U^i_{;r}V^{\bar j}X^kX^{\bar l}_{;\bar s}+U^iV^{\bar j}X^k_{;r}X^{\bar l}_{;\bar s}  \Big)\\
&= g^{r\bar s}\Big[R_{U\bar V r}\,^pR_{p\bar sX\bar X}+R_{r\bar V X}\,^p R_{U\bar s p\bar X} -R_{r\bar Vp\bar X} R_{U\bar s X}\,^p\Big] \\
&\quad -\frac{1}{2}\left[S^p_X R_{U\bar Vp\bar X}+S^{\bar q}_{\bar X}R_{U\bar V X\bar q} \right]\\
&\quad +\e \left(S_{U\bar V}g_{X\bar X}+g_{U\bar V}S_{X\bar X}+S_{U\bar V}g_{X\bar X}+g_{U\bar V}S_{X\bar X}\right)\\
&\quad +\e B_{i\bar jX\bar X} T^i_{rU}T^{\bar j}_{\bar s\bar V}-\frac{1}{2} \e\left(B_{i\bar VX\bar X}S^i_U +B_{U\bar j X\bar X}S^{\bar j}_{\bar V} \right).
\end{split}
\end{equation}

Similarly, 
\begin{equation}\label{evo-Rm-2}
\begin{split}
\Box R^\e_{V\bar UX\bar X}
&=g^{r\bar s}\Big[R_{V\bar U r}\,^pR_{p\bar sX\bar X}+R_{r\bar U X}\,^p R_{V\bar s p\bar X} -R_{r\bar Up\bar X} R_{V\bar s X}\,^p\Big] \\
&\quad -\frac{1}{2}\left[S^p_X R_{V\bar Up\bar X}+S^{\bar q}_{\bar X}R_{V\bar U X\bar q} \right]\\
&\quad +\e \left(S_{V\bar U}g_{X\bar X}+g_{V\bar U}S_{X\bar X}+S_{V\bar U}g_{X\bar X}+g_{V\bar U}S_{X\bar X}\right)\\
&\quad +\e B_{i\bar jX\bar X} T^i_{rV}T^{\bar j}_{\bar s\bar U}-\frac{1}{2} \e\left(B_{i\bar UX\bar X}S^i_V +B_{V\bar j X\bar X}S^{\bar j}_{\bar U} \right).
\end{split}\end{equation}

We emphasis that $U$ and $V$ only appear in the first two entry of the Chern-curvature tensor. Therefore by combining \eqref{evo-Rm-1}, \eqref{evo-Rm-2} \eqref{doubletime}, \eqref{b2} and \eqref{b1}, we can show that
\begin{equation}
\begin{split}
\Box (R^\e_{U\bar VX\bar X}R^\e_{V\bar UX\bar X})
&\leq -|\nabla R^\e_{U\bar VX\bar X}|^2-|\bar \nabla R^\e_{U\bar VX\bar X}|^2+C_nK_0 R^\e_{U\bar U}R^\e_{V\bar V}.
\end{split}
\end{equation}
And hence at $(p,t_0)$, 
\begin{equation}
\begin{split}
\heat F&\leq 2(1+Kt){\bf Re}\left(g^{r\bar s}\nabla_r R^\e_{U\bar U} \cdot \nabla_{\bar s}R^\e_{V\bar V} \right)\\
&\quad  -|\nabla R^\e_{U\bar VX\bar X}|^2-|\bar \nabla R^\e_{U\bar VX\bar X}|^2\\
&\quad -\frac{1}{8}K R^\e_{U\bar U}R^\e_{V\bar V}+2S_{X\bar X}|R^\e_{U\bar VX\bar X}|^2
\end{split}
\end{equation}

By using the fact that $\nabla F=0$ and $F=0$ at $(p,t_0)$, one can conclude that
$$2(1+Kt){\bf Re}\left(g^{r\bar s}\nabla_r R^\e_{U\bar U} \cdot \nabla_{\bar s}R^\e_{V\bar V} \right) \leq |\nabla R^\e_{U\bar VX\bar X}|^2+|\bar \nabla R^\e_{U\bar VX\bar X}|^2.$$

Using \eqref{doubletime} and $F(p,t_0)=0$ again, we deduce that $$2S_{X\bar X}|R^\e_{U\bar VX\bar X}|^2\leq C_nK_0R^\e_{U\bar U}R^\e_{V\bar V}$$ and hence at $(p,t_0)$, 
\begin{align}
\heat F<-\frac{1}{16}K R^\e_{U\bar U} R^\e_{V\bar V}
\end{align}
which contradicts with \eqref{max-2} provided that $K>\tilde C_nK_0$ for some $\tilde C_n>>1$.
\end{proof}

\begin{rem}
From the proof, the Griffiths non-positivity of Chern curvature is only used to ensure that $\Rm$ satisfies the curvature condition at $t=0$. It is clear that the proof is still true if we replace $\mathrm{BK}\leq 0$ by $R_{u\bar u}g_{v\bar v}\leq \lambda R_{u\bar u v\bar v}$ for some $\lambda<n^{-1}$ and modify $(1+Kt)$ to $(a+Kt)$ for some $a(n,\lambda)$.
\end{rem}

\section{Strong Maximum Principle}

In this section, we will show that the first Ricci curvature will become strictly negative shortly after the Hermitian curvature flow evolves. We will adapt the strong maximum principle in \cite{Cao1997} to the Hermitian curvature flow setting. One may also consider the Kernel of the first Ricci curvature in the Hermitian setting, see \cite[Theorem 5.2]{Yury2016}.

\begin{thm}\label{SMP-Ric}
Suppose $(M,g(t))$ is a solution to the Hermitian curvature flow on $M\times [0,T]$ with initial metric $g_0$. If $g_0$ has Griffiths non-positive Chern curvature and its first Ricci curvature is negative at some $p\in M$, then there is $\tau>0$ such that $\Ric(g(t))<0$ on $(0,\tau]$.
\end{thm}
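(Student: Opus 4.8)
The plan is to combine the parabolic preservation result, Proposition~\ref{preserve-Ric}, with a strong maximum principle for systems of the type studied in \cite{Cao1997}, adapted to the Chern connection. By Proposition~\ref{preserve-Ric} there is $t_0>0$ and $K>0$ such that on $M\times[0,t_0]$ the solution satisfies $\Ric(g(t))\le 0$ together with the auxiliary pinching $|R_{u\bar v x\bar x}|^2\le (1+Kt)R_{u\bar u}R_{v\bar v}$. So on $[0,t_0]$ the Hermitian tensor $h_{i\bar j}(t):=-R_{i\bar j}(g(t))$ is nonnegative. The evolution equation from Lemma~\ref{Ricci-evo} shows that $h$ satisfies a parabolic equation of the form $\partial_t h_{i\bar j}=\Delta h_{i\bar j}+(\text{first-order terms in }\nabla h\text{ with torsion coefficients})+N_{i\bar j}(h)$, where, using the pinching $|R_{i\bar jk\bar l}|^2\le C R_{i\bar i}R_{j\bar j}$ exactly as in Case~1 of the proof of Lemma~\ref{almost-preserve}, the zeroth-order reaction term $R_{i\bar jk}{}^pR_p{}^k-\tfrac12(S_i^pR_{p\bar j}+S_{\bar j}^{\bar q}R_{i\bar q})$ is controlled by $h$ itself on the null-space of $h$. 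The first step is therefore to write this cleanly: fix a point and time where $h$ has a kernel, and record that the reaction term, restricted to directions in $\ker h$, vanishes to the appropriate order so that Hamilton's/Cao's null-eigenvector argument applies.

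The second step is to run the strong maximum principle for tensors on the interval $(0,t_0]$. Suppose, for contradiction, that $\Ric(g(t))$ is not strictly negative on all of $(0,t_0]$; then there is a first time $t_1\in(0,t_0]$ and a point $q$ and a unit vector $\xi\in T_q^{1,0}M$ with $h(\xi,\bar\xi)=0$, while $h\ge 0$ everywhere on $M\times[0,t_1]$. Following \cite{Cao1997}, extend $\xi$ in space and time so that $\nabla\xi=0$ and $\partial_t\xi$ is chosen to absorb the torsion first-order terms (the same device as in \eqref{ext-1}, \eqref{ext-2}), so that $f(x,t):=h(g^{-1}\text{-parallel extension of }\xi)$ is a locally defined function attaining an interior spatial minimum value $0$ at $(q,t_1)$ with $\partial_t f\le 0$ there. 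The evolution equation then gives $\partial_t f\ge \Delta f + (\text{first order})+N$, where at $(q,t_1)$ the first-order term vanishes because $\xi$ realizes the minimum (so $\nabla f=0$ there after the extension is arranged), and $N\ge -C f$ by the zeroth-order control from the first step. A barrier/contradiction argument (or a Harnack-type inequality for $f$) then forces $f\equiv 0$ in a backward parabolic neighborhood, hence $\ker h(t)$ is a parallel, time-independent distribution for $t<t_1$; but pushing this back to $t=0$ contradicts the hypothesis that $\Ric(g_0)<0$ at $p$ (so $h(0)>0$ somewhere, and by the propagation $\ker h$ would have to be trivial). This yields $\Ric(g(t))<0$ on $(0,\tau]$ with $\tau:=t_0$.

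The key point requiring care — and the main obstacle — is the non-\K-ness: the extra terms $T*\nabla\Rm$ in Lemma~\ref{Ricci-evo} are genuinely first-order and do not vanish just from $h$ attaining its minimum, so one must verify that the vector-field extension \eqref{ext-1} (the Hermitian Uhlenbeck trick already used in Section~4) simultaneously kills these torsion terms \emph{and} is compatible with the time-direction extension needed for the strong maximum principle, uniformly near $(q,t_1)$. Equivalently, one must check that the operator acting on $f$ after the extension is a genuine (possibly non-self-adjoint, but still uniformly parabolic) scalar operator $\partial_t - \Delta - b^i\nabla_i - b^{\bar i}\nabla_{\bar i} - c$ with bounded coefficients, to which the classical strong maximum principle applies; the boundedness of $b$ and $c$ on the relevant neighborhood comes from the doubling-time estimate \eqref{doubletime} and the pinching estimates \eqref{a2}. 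Once that reduction is in place, the argument is the standard Hamilton–Cao propagation of the kernel, and the conclusion follows; I would also remark, as the paper does, that the alternative route via \cite[Theorem~5.2]{Yury2016} on the kernel of the first Ricci curvature gives the parallel, holomorphic, time-independent structure of $\ker h$ directly, from which ampleness of $K_M$ off that distribution and then the full statement can likewise be deduced.
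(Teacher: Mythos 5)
Your strategy---reduce to Proposition \ref{preserve-Ric} and then propagate the kernel of $h=-\Ric$ by a strong maximum principle for tensors---is genuinely different from what the paper does; it is essentially the alternative route the paper only flags in passing via \cite[Theorem 5.2]{Yury2016}. The paper's actual proof follows \cite{Cao1997} and never invokes a tensor \emph{strong} maximum principle: it picks a bump function $\phi_0\ge 0$ with $\phi_0(p)>0$ and $\Ric(g_0)+\phi_0 g_0\le 0$, evolves $\phi$ by the scalar heat equation coupled to $g(t)$ so that the scalar strong maximum principle gives $\phi>0$ for $t>0$, and then shows $\Ric(g(t))+e^{-kt}\phi^2 g-\e e^{Bt}g\le 0$ by an ordinary first-touching-point (weak) maximum principle, using the extension \eqref{ext-3} to absorb the torsion first-order terms and the pinching of Proposition \ref{preserve-Ric} to bound the reaction term $R_{v\bar v k}{}^p R_p{}^k$ by a multiple of $|R_{v\bar v}|$, which at the touching point equals $e^{-kt}\phi^2-\e e^{Bt}$ and is therefore dominated by choosing $k,B\gg K_0$. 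Strict negativity for $t>0$ then comes entirely from $\phi>0$; no analysis of how zeros of the tensor spread is needed. That is what the barrier buys: a single null-eigenvector computation at one point replaces the whole kernel-propagation machinery.

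As written, your sketch has a real gap precisely at the propagation step. First, a ``first time $t_1$ at which $h$ acquires a kernel'' need not exist: $h(0)=-\Ric(g_0)$ is only nonnegative and positive at the single point $p$, so its kernel is in general already nonempty at $t=0$; you should instead fix an arbitrary $(q,t_1)$ with $t_1>0$ and a null vector $\xi$. Second, and more seriously, the scalar strong maximum principle applied to $f=h(\xi,\bar\xi)$ only yields $f\equiv 0$ on the neighborhood of $(q,t_1)$ where the extension of $\xi$ is defined; the jump from there to ``$\ker h$ is a parallel, time-independent distribution'' and then to a contradiction at $(p,0)$ is exactly the content of Hamilton's strong maximum principle for systems (or of \cite[Theorem 5.2]{Yury2016}), and it is the part that requires proof, especially in the presence of torsion. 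A workable repair along your lines is to apply the strong maximum principle to the globally defined Lipschitz function $\lambda_{\min}(x,t)=\min\{h(\xi,\bar\xi):\xi\in T^{1,0}_xM,\ |\xi|_{g(t)}=1\}$, checked to be a supersolution in the barrier sense; here your observation that $R_{\xi\bar\xi}=0$ forces $R_{\xi\bar\xi k\bar l}=0$ (by Proposition \ref{preserve-Ric}(2) and polarization), hence the vanishing of the reaction term on the kernel, supplies the zeroth-order bound, and one concludes $\lambda_{\min}\equiv 0$ on $M\times[0,t_1]$, contradicting $\lambda_{\min}(p,0)>0$. Alternatively, adopt the paper's barrier argument, which sidesteps the issue entirely.
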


\begin{proof}
Let $\tau$ be the constant obtained from Proposition \ref{preserve-Ric}. We adopt the argument in \cite{Cao1997} to Hermitian curvature flow setting. Let $y\in M$ be a point at which the first Ricci curvature is negative. Let $\phi_0$ be a smooth non-negative function such that $\phi_0(y)>0$, $\phi_0=0$ outside a neighbour of $y$ and $$\Ric(g_0)+\phi_0g_0\leq 0$$ on $M$. Let $\phi(z,t)$ be the solution to the heat equation
\begin{equation}
\label{heat}
\begin{split}
\left( \frac{\partial}{\partial t}-\Delta_{g(t)}\right) \phi(x,t)&=0,\quad\text{on}\;\;M\times [0,\tau];\\
\phi(x,0)&=\phi_0.
\end{split}
\end{equation}
It then follows by strong maximum principle that $\phi(x,t)>0$ on $M\times(0,\tau]$. We may assume that $\phi(x,t)\leq 1$ by rescaling.

Let $k=c_nK_0$ with $c_n>>1$. For any $\e>0$, consider the tensor 
$$A^\e=\Ric_{g(t)}+e^{-kt}\phi^2 g(t)-\e e^{Bt}g(t)$$
where $B$ is some large constant to be specified later. We  claim that $A^\e\leq 0$ on $M\times [0,\tau]$. Then the result follows by letting $\e\rightarrow 0$. We omit the index $\e$ for notational convenience. Noted that $A(0)<0$ on $M$. Suppose not, there is $t_0\in (0,\tau]$ such that for all $(z,t)\in M\times [0,t_0]$, $u\in T^{1,0}_zM$, 
\begin{equation}
\begin{split}
A_{u\bar u}(z,t)&\leq 0.
\end{split}
\end{equation}
And there is $x_0\in M$, $v\in T^{1,0}_xM$ so that $A_{v\bar v}(x_0,t_0)=0$. We may further assume that $|v|_{t_0}=1$ by rescaling. As in the proof of Lemma \ref{almost-preserve}, we extend $v$ around $(x_0,t_0)$ locally such that 
\begin{equation}\label{ext-3}
\begin{split}
\nabla_{\bar q}v^p=0,\;\;\;\nabla_{ q}v^p=T^p_{ql}v^l.
\end{split}
\end{equation}

Therefore, the function $A_{v\bar v}$ attains local maximum at $(x_0,t_0)$ and therefore obeys 
\begin{align}\label{maxpoint}
\heat\Big|_{(x_0,t_0)} A_{v\bar v}\geq 0.
\end{align}

On the other hand, if we choose coordinate at $x_0$ such that $g_{i\bar j}=\delta_{i\bar j}$, then 
\begin{equation}\label{evo-A-1}
\begin{split}
\heat A_{v\bar v}&=\Box A_{i\bar j}\cdot v^iv^{\bar j}+A_{i\bar j}\left(v^i \Box v^{\bar j}+ v^{\bar j}\Box v^i \right)\\
&\quad -g^{r\bar s}\left(A_{i\bar j}v^i_{;r} v^{\bar j}_{;\bar s}+A_{i\bar j;r} v^i v^{\bar j}_{\bar s}+A_{i\bar j;\bar s}v^i_{;r}v^{\bar j}\right)\\
&=\Box A_{i\bar j} v^i v^{\bar j} -A_{i\bar j} T^i_{rv} T^{\bar j}_{\bar r\bar v}-A_{i\bar j;\bar r}T^i_{rv}v^{\bar j}-A_{i\bar j; r}T^{\bar j}_{\bar r\bar v}v^{i}.
\end{split}
\end{equation}
where we have used \eqref{ext-3}. Moreover, the first bracket vanishes due to the fact that 
\begin{align}\label{first-order-2}
A_{v\bar u}=0
\end{align}
 for all $u\in T^{1,0}_{x_0}$. This fact can be seen by considering the first variation of functions, $A(v+tu,\bar v+t\bar u)$ and $A(v+t\sqrt{-1}u,\bar v-t\sqrt{-1}\bar u)$ at $t=0$.

Moreover, we use Lemma \ref{Ricci-evo}, \eqref{first-order-2} and \eqref{heat} to deduce that 
\begin{equation}
\begin{split}
\Box A_{i\bar j}\cdot v^iv^{\bar j}&=R_{i\bar j} T^i_{rv} T^{\bar j}_{\bar r\bar v}+R_{i\bar j;\bar r}T^i_{rv}v^{\bar j}+R_{i\bar j; r}T^{\bar j}_{\bar r\bar v}v^{i}\\
&\quad +R_{v\bar vk}\,^p R_{p}^k -\frac{1}{2} \left(S_v^pR_{p\bar v} +S^{\bar q}_{\bar v} R_{v\bar q} \right)\\
&\quad -ke^{-kt} \phi^2 -2|\nabla\phi|^2 e^{-kt} -\phi^2 e^{-kt} S_{v\bar v}\\
&\quad -\e Be^{Bt} +\e e^{Bt}S_{v\bar v}
\end{split}
\end{equation}

Combines with \eqref{evo-A-1} together with \eqref{doubletime}, \eqref{first-order-2}, Theorem \ref{preserve-Ric} and the fact that $A_{v\bar v}(x_0,t_0)=0$, it gives
\begin{equation}
\begin{split}
\heat A_{v\bar v}&=\left(-\phi^2e^{-kt}+\e e^{Bt} \right)g_{i\bar j}T^i_{rv}T^{\bar j}_{\bar r\bar v}-4\phi e^{-kt} {\bf Re}\left( \phi_{\bar r} T_{rv\bar v}\right)\\
&\quad +R_{v\bar vp\bar q}R^{\bar q p} -ke^{-kt} \phi^2 -2e^{-kt} |\nabla\phi|^2-\e Be^{Bt}\\
&\leq (-k+C_nK_0)\phi^2 e^{-kt}+ \e e^{Bt}( -B+C_nK_0).
\end{split}
\end{equation}
Hence, if we choose $k$ and $B$ sufficiently large, then it contradicts with \eqref{maxpoint}. In conclusion, we have shown that there is $k,B>0$ such that for all $\e>0$, $(x,t)\in M\times [0,\tau]$, 
$$\Ric(g(t))\leq \left(-e^{-kt}\phi(x,t)+\e e^{Bt}\right) g.$$
In particular, by letting $\e\rightarrow 0$, we have $\Ric(g(t))<0$ when $t\in (0,\tau]$.
\end{proof}

By using Theorem \ref{SMP-Ric}, the main theorem is immediate.
\begin{proof}[Proof of Corollary \ref{main-2}]
By the short time existence result in \cite{StreetTian2011}, there is a short time solution to the Hermitian curvature flow \eqref{HRF} starting from $g_0$. By Theorem \ref{preserve-Ric} and Theorem \ref{SMP-Ric}, there is $\tau>0$ such that $\Ric(g(\tau))<0$ on $M$. Therefore, $c_1(K_M)>0$ and hence K\"ahler. The existence of K\"ahler-Einstein follows by a fundamental result of Aubin \cite{Aubin1976} and Yau \cite{Yau1978}, see also \cite{TosattiWeinkove2015} for a parabolic proof using the Chern-Ricci flow starting from any smooth Hermitian metric.
\end{proof}

\end{document}